\def\ove#1{\overline{#1}}    %
     \def\({\big(}                  
     \def\){\big)}      \def\e{\varepsilon}    
     \def\[{\left[}       
     \def\]{\right]}      \def\ffi{\varphi}
                                      \def\ot{\otimes}
     \def\<{\langle}                 \def\wh{\widehat}
     \def\>{\rangle}                 \def\wt{\widetilde}
\def\Id{\operatorname{Id}}
\newcounter{minutes}\setcounter{minutes}{\time}
\newcounter{hours}\setcounter{hours}{\time}
\numberwithin{equation}{section}
\theoremstyle{plain}
\newtheorem{theorem}[equation]{Theorem}
\newtheorem{corollary}[equation]{Corollary}
\newtheorem{proposition}[equation]{Proposition}
\newtheorem{lemma}[equation]{Lemma}
\newtheorem{remark}[equation]{Remark}
\newtheorem{definition}[equation]{Definition}
\newtheorem{example}[equation]{Example}
\begin{document}



\begin{center}
{\bf \large Remarks on some results of G. Pisier  and P. Saab on convolutions\footnote{${ }$ The material of this paper has some intersections with the material of the paper \cite{OR21}.}
}
\end{center}

\begin{center}
{by \bf  Oleg Reinov
}
\end{center}

\begin{abstract}
{
A result of G. Pisier says that a convolution operator $\star f : M(G) \to C(G),$ where $G$ is a compact Abelian group, 
can be factored through a Hilbert space if and only if $f$ has the absolutely summable set of Fourier coefficients. P. Saab (2010) generalized this result
in some directions in the vector-valued cases. We give some further generalizations of the results of G. Pisier and P. Saab,
considering, in particular, the factorizations of the operators through the operators of Schatten classes in Hilbert spaces.
Also, some related theorem on the factorization of operators through the operators of the Lorentz-Schatten classes are obtained.
}
\end{abstract}









{\section{Introduction}}\label{0}

  In this note, we will be interested in problems related to the possibility of factorizations of various types
 of nuclear operators through operators in Hilbert spaces and their applications, in particular,
 in the theory of tensor products of Banach spaces.

Studying the behavior of the eigenvalues of nuclear operators, A. Grothen\-dieck  \cite{Gr55} based, in particular,
on the possibility of factorizations of  these operators through  the operators of Schatten classes $S_p,$
then applying the theory of operators in Hilbert spaces.
  Moreover, as is now clear, he obtained exact results in the scales of those spaces of operators that he studied.
  Let us give some examples of A. Grothendieck's ideas in applications of this technique (which, in particular,
  contributed to the appearance of this note).

 Recall that an operator $T: X\to Y$ in Banach spaces is called nuclear if it can be represented in the following form:
 \begin{equation}\label{T:N1}
Tx= \sum_{n=1}^\infty \mu_n \langle x'_n,x\rangle y_n, \ \text{ for }\ x\in X,
\end{equation}
  where $(x_n)\in X, (y_n)\in Y$ are bounded sequences,  $\mu:=(\mu_n)$ is an absolutely summable sequence of complex numbers
(it is clear that they can be considered real and non-negative).
 Every nuclear operator $T$ can be factorized in the following way:
  \begin{equation}\label{T:N1f}
 T=AB: X\overset{B}\to l^2\overset{A}\to Y,
\end{equation}
 where $A, B$ are bounded operators.
 Indeed, it is enough to put
 $Bx:= \{\sqrt \mu_n \langle x'_n,x\rangle\}\in l^2$ and $A(\alpha_n):= \sum \alpha_n \sqrt \mu_n y_n$ for $(\alpha_n)\in l^2.$
Let now $X=Y,$ consider, along with the operator  $AB,$ the operator $BA: l^2\to X\to l^2.$
If $e_k$ denotes $k$-th orth in $l^2$ then $Ae_k= \sqrt \mu_k y_k$ and
$$\langle BAe_k,e_m\rangle= \langle \sum_n\sqrt \mu_n \langle x'_n, \sqrt \mu_k y_k\rangle e_n, e_m\rangle = \sqrt \mu_m \langle x'_m, \sqrt \mu_k y_k\rangle ,$$
i. e. $\langle BAe_k,e_m\rangle = \sqrt \mu_m \langle x'_m, y_k\rangle   \sqrt \mu_k$ and thus $\sum_k ||BAe_k||^2<\infty.$
Therefore, $BA$ is the Hilbert-Schmidt operator.
Further, since the (complete) sequence of eigenvalues of the operator $BA$ (counted with their algebraic multiplicities)
lies in $l^2,$ the complete sequence of all eigenvalues of the nuclear operator $ T = AB $ also belongs to $l^2.$

As A. Grothendieck \cite{Gr55} noted, this result is sharp, and   the continuous on the unit circle 
Carleman function whose Fourier coefficients lie  in $l^2$  and do not lie in any space $l_p$ for 
$p<2$ gives  an example of this. 
 Indeed, the convolution operator
 with this function considered in the space of all continuous functions on the circle $ \mathbb T $
 is nuclear, and the complete set of its eigenvalues coincides with the sequence of Fourier coefficients
 of the Carleman function. At the same time, this example shows that for an arbitrary nuclear operator
 in Banach spaces, the factorization through a continuous operator in a Hilbert space (through an operator
 of the class $ S_\infty; $ here, through the identity operator) is the best possible among the factorizations
 of operators though operators of the Schatten classes $S_p.$
 Indeed, otherwise (e. g., in the case of a possible factorization through the $S_p$-operator with $ p\in (0,\infty)$),
 the above reasoning with the Hilbert-Schmidt operator would allow us to conclude that
 the eigenvalues of the nuclear operators lie in some $ l^q $ for $ q<2 $
 (namely, for $ q $ from the relation $ 1/2 + 1/p = 1/q; $ we use the inclusion $ S_p \circ S_2 \subset S_q).$
 Note, however, that each nuclear operator can be factorized through a compact operator
 in Hilbert space. To see this, it is enough to split off from the non-negative sequence $ \mu $
 a piece (sequence) $ \nu: = (\nu_n) $ so as to obtain the relations
 $ \nu_n \to \infty $ and $ \sum \mu_n \nu_n <\infty. $
 Correcting the definitions of the operators $ A $ and $ B $ in (\ref {T:N1f}) respectively,
 we obtain the factorization  of $ T $ through a diagonal operator in $ l^2 $ with a diagonal tending
 to zero (for example, through $\Delta:= (\nu_n).$)

The situation is somewhat more complicated with the so-called $ p $-nuclear operators for $ 0 <p <1 $
(considered for the first time by A. Grothendieck, but under the name "Operateurs de puissance p.eme sommable").
 The definition of a $ p$-nuclear operator is similar to the above definition of a nuclear operator,
but in the relation (\ref{T:N1}) we consider the sequence $ (\mu_n) $ from the space $ l^p. $
Each $p$-nuclear operator $ T: X \to Y $ admits a factorization of the form (\ref {T:N1f}).
Moreover, one can factorize $ T $ not only through a compact operator, but also through an operator
from the class $ S_q $ for the exponent $ q $ with $ 1/q = 1/p-1 $ (see~\cite{Gr55}, Chap. 2, p . 11).   
As above, this implies the corresponding result on the distribution of the eigenvalues of $p$-nuclear operators:
the eigenvalues of $ p$-nuclear operators lie in $ l_q, $ where $ 1/q = 1/p-1/2. $
It should be noted, however, that this result is not final. In the scale of spaces $ l_p $, it is exact
(see, for example, \cite{K86}). But if we go to the scale of Lorentz spaces $ l_{p, r}, $ then
the exact result looks like this (\cite{K86}): for any $ p\in (0,1) $ the sequence of eigenvalues of any
$p$-nuclear operator lies in $ l_{q, p}, $ where $ 1/q = 1/p-1/2. $

In the first case, A. Grothendieck's remark on the sharpness of his statement on the eigenvalues
of nuclear operators (using the example of Carleman) is unimprovable in the scale of Lorentz spaces.
Perhaps the use of the Carleman function confirms this fact, but I do not know this, and I did not
check if this function lies in any Lorentz space $ l_{2, s} $ for $ s<2. $
Therefore, let us use, e. g., the Kahan-Katznelson-de Leeuw theorem \cite{KKL}:
for any sequence $ a\in l_2(\mathbb Z) $ there exists a function $ F_a \in C(\mathbb T)$
such that for all $ j\in \mathbb Z, \, |\hat F_a(j)| \ge |a_j|. $
To obtain the exactness of Grothendieck's theorem (in the scale of Lorentz sequence spaces), it is sufficient to
take any sequence $ a\in l_2 (\mathbb Z) \setminus \cup_{s<2} l_{2, s} $ and the corresponding function $ F_a $
and consider, following Grothendieck, the convolution operator with this function in the space $C(\mathbb T).$
Note that a simpler example could be used, namely the example from the book
\cite{K86} (Example 2 .b.14, p. 107), in which the sharpness
(in the scale of Lorentz sequence spaces)
of the Grothendieck's theorem is explicitly obtained:
for any sequence $ \sigma \in l^2$ there exists a nuclear operator $ T $ such that the sequence
$ \sigma \in l^2$ is a subsequence of the sequence
the eigenvalues of the operator $T. $ Finally, we can use the stronger (final) result of R. Kaiser and J. Rutherford \cite{KR}:
any sequence $ a\in l^2 $ is exactly a sequence of eigenvalues of some nuclear operator
(zero terms of sequences are not considered).

All  that was said above shows that for a nuclear operator to be factorized through a compact operator
$(S_\infty$-operator)
in a Hilbert space is the best that one can get answering a question "What are the  exponents
$p,q\in (0,\infty]$ for which  every nuclear operator can be factored through an $S_{p,q}$-operator?"
(Lorentz--Schatten class; see below).

In the second case, where we consider the $p$-nuclear (or more generally, $(p,q)$-nuclear) operators,
we can use the same factorization ideas as above for the case $p=1.$
It can be seen that every $p$-nuclear operator $T$ can be factored through an operator from $S_{q}(H),$
where $1/q=1/p-1$ (so, $q=\infty$ if $p=1).$ It follows from such a factorization that the eigenvalue
sequence of $T$ lies in $l_r$ with $1/r=1/p-1/2.$ In the scale $S_p$ of Schatten classes the last result is best possible.
The same can be said about the factorizations of $p$-nuclear operators through an $S_q$-operators.
However. if we consider the scale s of Lorentz sequence spaces $l_{r,s}$ and $S_{r,s}$ of operators of Lorentz-Schatten classes,
then the questions are not so clear. Recall that an operator $T\in L(X,Y)$ is said to be $(r,s)$-nuclear,
where $0<r,s\le1,$ if it admits a representation
(\ref{T:N1} with ${(\mu_n)}\in l_{r,s}.$

 On the one hand,  H. K\"onig 
 showed in \cite{K80},
 that the eigenvalues of the $p$-nuclear operators $(0<p<1)$
 lie in the Lorentz space $l_{r,p},$ where $1/r=1/p-1/2$ and this result is sharp (see~\cite{K86}, p. 126).


On the other hand, the eigenvalues result do not give a possibility to find out whether
the above result on the factorization of $T$ through an $S_q$-operator is the best one in the scale
$S_{r,s}.$ We need to proceed in another way.

A result of G. Pisier \cite{Pis85} gives us a possibility to get one more negative answer to the question considered in \cite{Rei17}
on the product of two nuclear operators (see below Corollary \ref{NN}).
G. Pisier has shown that if a convolution operator
$$\star f : \, M(G)\to C(G),$$
where $G$ is a compact Abelian group, $M(G)=C(G)^*$ and $f\in C(G),$
can be factored through a Hilbert  space, then $f$ has
the absolutely summable set of Fourier coefficients.
It is clear that the condition
 "the convolution operator  ... can  be factorized through a Hilbert space"
 $$
\star f: M(\mathbb T) \to H \to C(\mathbb T)
$$
is  the same as the condition
"the operator $\star f$ can be factored through a bounded operator $U$ in a Hilbert space":
$$
\star f: M(\mathbb T) \to H \overset U\to H \to C(\mathbb T).
$$
We are going to generalize this result, so let us give some notes about it.

 Let $S(H)$ be an ideal in the algebra $L(H)$ of all bounded operators in $H$ (e.g., the
ideal of compact operators). What is the condition on the set $\{\hat f(n)\}$ that gives
a possibility to factorize the operator $\star f$ through an operator from $S(H)?$
$$
\star f: M(\mathbb T) \to H \overset {U\in S}\longrightarrow H \to C(\mathbb T).
$$
We present some generalizations of the result of G. Pisier, giving answers to the question for the ideals
$S_{p}(H)$ of operators from the Schatten classes
(operators, whose singular numbers are in the sequence space $l_{p})$ and for general compact Abelian groups $G:$
$$
\star f: M(G) \to H \underset{S_p(H)}\longrightarrow H \to C(G),\, \ f\in C(G).
$$
Moreover, we will consider even convolution operators in vector-valued function spaces, generalizing
the result of G. Pisier and a result of P. Saab \cite{Sa8}, Theorem 4.2, where
it was shown that the Pisier's techniques in the scalar case can be
extended to the vector-valued case (factorizations of a vector valued convolutions through Hilbert spaces).
We will get also two theorems which are very close to
to some generalizations of
main theorem from \cite{Sa8}.
generalizing also main results of P. Saab from \cite{Sa8}.

{\section{Preliminaries}}\label{1}
All the spaces  $X,Y,Z,W, \dots$
are Banach, $x, x_n, y, y_k, \dots$ are elements of spaces $X, Y,\dots$ respectively.
All linear mappings (operators) are continuous; as usual, $X^*, X^{**}, \dots$
are Banach duals (to $X$), and $x', x'', \dots$ (or $y', \dots)$ are the functionals on $X, X^*,\dots$
(or on $Y,\dots).$ By $\pi_Y$ we denote the natural isometric injection of $Y$ into its second dual.
If $x\in X, x'\in X^*$ then $\langle x,x'\rangle=\langle x',x\rangle=x'(x).$
$L(X,Y)$ stands for the Banach space of all linear bounded operators from $X$ to $Y.$
We always consider the space $X$  as the  subspace
$\pi_X(X)$ of its second dual $X^{**}$ (denoting, if needed, by $\pi_X$ the canonical injection).

\subsection{Analysis on Groups}

We refer to \cite{Rud} on general topics of this subsection and to \cite{DiVM} for the information on
vector-valued function spaces and vector measures.

Let $G$ be a compact Abelian group,
$m$ be a Haar measure on $G$ 
(i.e. the unique translation invariant normalized regular Borel measure, or,
what is the same, Radon probability),
$\Gamma$ be the dual group of $G,$ i.e., the group of all characters on $G$
(multiplicative continuous complex functions $\gamma$ 
so that $|\gamma(t)|=1$ for all $t\in G).$   
Note that $\Gamma$ is discrete.
$C(G)$ is the Banach space of all continuous (complex-valued) functions on $G$
with the natural uniform norm: if $\ffi\in C(G,)$ then
$$
||\ffi||_\infty := \sup_{t\in G} |\ffi(t)|.
$$
$L_p(G),$ $1\le p<\infty,$ --- the Banach space of all $(m$-equivalent classes of)
absolutely $p$-summable Borel functions on $G,$  
$$
||\ffi||_p := (\int_G |\ffi|^p\, dm)^{1/p}<\infty \, \text{ for }\,  \ffi\in L_p(G).
$$
$M(G)$ is the Banach space of all (complex-valued) finite regular Borel measures on $G$
with  the variation norm $|\mu|(G)$ (or, what is the same, with the norm induced from $C^*(G)$ 
by the Riesz Representation Theorem).

 If $f\in L_p(G),$ $1\le p<\infty,$ and $\mu\in M(G),$ then
 $$
 f\star \mu(g):= \int_G f(g-h)\, d\mu(h)\, \text{ for }\, g\in G,
 $$
 $$
 || f\star \mu||_p\le ||f||_p\, ||\mu||.
 $$
 If $f\in C(G),$ then
  $$
 f\star \mu(g)\in C(G)\, \text{ and } \,
 || f\star \mu||_\infty\le ||f||_\infty\, ||\mu||.
 $$
If $f\in L_1(G),$   
and $\mu\in M(G),$ then the Fourier transform of $f$ and $\mu$ are defined by
 $$
 \hat f(\gamma):= \int_G \overline{\gamma(h)} f(h)\, dm(h)\, \text{ for }\, \gamma\in \Gamma;\
 $$
 (maps $L_1(G)\to C_0(\Gamma), \,  || \hat f||_\infty\le ||f||_1)$ and
   $$
 \hat \mu(\gamma):= \int_G \overline{\gamma(h)}\, d\mu(h)\, \text{ for }\, \gamma\in \Gamma.
 $$
 Here $C_0(\Gamma)$ is the subspace of $C(\Gamma),$ consisting of all functions
which vanish at infinity.


\subsection{Tensor products and  summing operators}

We refer to \cite{DiVM, Gr55, Gr56, SchatTensors} on tensor products of Banach spaces and to \cite{PiOP, DiASO} for the information on
$p$-absolutely summing operators.

\subsubsection{Tensor product and integral operators}
For Banach spaces $X,Y,$ denote by $F(X,Y)$ the linear subspace of the space $L(X,Y,)$ consisting
of all finite rank operators. Algebraic tensor product $X^*\otimes Y$ will be identify with
the linear space $F(X,Y):$ every tensor element $z:=\sum_{n=1}^N x'_n\otimes y_n$ can be
considered as an operator $\widetilde z(\cdot):=\sum_{n=1}^N \langle x'_n,\cdot\rangle  y_n.$
Also, $X\otimes Y$ can be considered as a subspace of the vector space $F(X^*, Y)$
(namely, as vector space of all linear weak${{}^*}$-to-weak continuous
finite rank operators). We can identify also the tensor product (in a natural way)
with a corresponding subspace of $F(Y^*,X).$ If $X=W^*,$ then
$W^*\otimes Y^{**}$ is identified with $F(X,Y^{**})$ (or with $F(Y^*,X^*).$

The projective norm of an  element $z\in X\otimes Y$ is defined as
$$
||z||_{\land}:=\inf \{ \sum_{n=1}^N ||x_n||\,||y_n||:\ z=\sum_{n=1}^N x_n\otimes y_n,\ (x_n)\subset X, (y_n)\subset Y\}.
$$
The completion of the normed space $(X\otimes Y, ||\cdot||_\land)$ is called the projective tensor
product of Banach spaces $X$ and $Y$ and denoted by $X\widehat\otimes Y.$ Every element
can be written in the form
\begin{equation} \label{tp1}
z=\sum_{n=1}^\infty x_n\otimes y_n\ \text{ with } \sum_{n=1}^\infty ||x_n||\,||y_n||<\infty.
\end{equation}
Note, that $X\widehat\otimes Y= Y\widehat\otimes X$ under natural identification.
Every element $z$ of $X\widehat\otimes Y$ generates an operator $\widetilde z: X^*\to Y:$
If $z$ has a representation \ref{tp1}, then $\widetilde z(x'):= \sum_{n=1}^\infty \langle x_n,x\rangle  y_n.$

A linear functional "trace" is defined on each tensor product $z\in X^*\otimes X:$
If $z=\sum_{n=1}^N x'_n\otimes x_n,$ then $\operatorname{trace}\ z=\sum_{n=1}^N \langle x'_n, x_n\rangle $ and
the last sum does not depend on a representation of $z.$
This functional has a unique extension to the completion $X^*\widehat\otimes X$ and its value at an element
$z\in X^*\widehat\otimes X$ is denoted again by $\operatorname{trace}\ z.$ If $z=\sum_{n=1}^\infty x_n\otimes x_n,$ then
$\operatorname{trace}\ z= z=\sum_{n=1}^\infty \langle x'_n, x_n\rangle .$

A dual space of the tensor product $X\widehat\otimes Y$ is $L(Y,X^{*})$ with duality
defined by
$$
\langle T, z\rangle := \operatorname{trace}\ T\circ z = \sum_{n=1}^\infty \langle x_n, Ty_n\rangle ,\ z\in X\widehat\otimes Y, T\in L(Y,X^*).
$$
Here, $T\circ z$ is an element $\sum_{n=1}^\infty x_n\otimes Ty_n\in X\widehat\otimes X^*.$
In particular, $(X^*\widehat\otimes Y)^*=L(Y, X^{**})=L(X^*,Y^*).$

If $A\in L(X,W),$ $B\in L(Y,G)$ and $x\otimes y\in X\otimes Y,$ then  a linear map
  $A\otimes B: X\otimes Y\to W\otimes G$ is defined by
  $A\otimes B((x\otimes y):= Ax\otimes By$ (and then extended by linearity). Since
 $\widetilde{A\otimes B(z)}= B\widetilde z A^*$ for $z\in X\otimes Y,$ we can use notation
$B\circ z \circ A^*\in W\otimes G$ for $A\otimes B(z).$

There is another natural norm on the tensor product $X\otimes Y,$ namely, the norm
induced from $L(X^*,Y),$ that is the uniform norm. The completion of $X\otimes Y$
with respect to this norm coincides with the closure of $X\otimes Y$ in $L(X^*,Y),$
is denoted by $X\widetilde\otimes Y$ and is called the injective tensor product of $X$ and $Y.$
 In particular, the injective tensor product $X^*\widetilde\otimes Y$ is exactly the closure
 of all finite rank operators in $L(X,Y)$ and contained in the Banach space $K(X,Y)$
 of all compact operators from $X$ to $Y.$

 The dual space to $X\widetilde\otimes Y$ can be identify with so-called integral operators
 from $Y$ to $X^*.$ We will use the following definition of an integral operator in
 Banach spaces: An operator $T:Z\to W$ is said to be integral
 (they say also "integral in the sense of Pietsch") if there  exist a compact space $K,$ a probability
 measure $\mu\in C^*(K)$ and two bounded operators $A:Z\to C(K)$ and $B: L_1(K,\mu)\to W$
 so that $T$ admits the following factorization:
 $$
 T=BjA: Z\overset A\to C(K)\overset j\hookrightarrow L_1(K,\mu)\overset B\to W
 $$
 where $j$ is a natural inclusion.
 With the norm $i(T):=\inf ||A||\,||B||$ the space $I(Z,W)$ of all integral operators
 is Banach. For any $z=\sum_{n=1}^N x_n\otimes y_n\in X\otimes Y$ and $V\in I(Y,X^*)$ the composition
 $V\circ z$ lies in $X\otimes X^*$ and $||V\circ z||_\land\le ||\widetilde z||i(V).$ Thus $V$
 generates a linear continuous map from $X\widetilde\otimes Y$ into $X\widehat\otimes X^*,$ the trace
 of $V\circ A$ is well defined for every $A\in X\widetilde\otimes Y$ and $|\operatorname{trace}\ V\circ A|\le ||A||i(V).$
 The linear continuous functional $\operatorname{trace}\ V\circ\cdot$ defines a duality between the spaces
 $X\widetilde\otimes Y$ and $I(Y,X^*)$ and the last space is the dual to the injective tensor product $X\widetilde\otimes Y$
 with respect to  this duality.

 Let us mention that the above norms in $\widehat\otimes$ and in $\widetilde\otimes$ are the greatest and least crossnorms
 respectively (see, e. g., \cite{DiVM}, p. 221).
 Projective and injective tensor products of several Banach spaces can be defined by induction.

 Two  important notion in connection with the just introduced notions:
 They say that a Banach space $X$ has the approximation property if for every Banach space $Y$
 the natural mapping $Y^*\widehat\otimes X\to L(Y,X)$ is injective; $X$ has  the metric approximation
 property if for every Banach space $Y$
 the natural mapping $Y^*\widehat\otimes X\to I(Y,X^{**})$ is an isometric embedding.
 Such spaces as $L_p(\mu), C(K),$ $M(G)=C^*(G)$ and all their duals have the metric approximation
 property \cite{Gr55}.

\subsubsection{Absolutely summing operators}
A series $\sum_{n=1}^\infty x_n$ in a Banach space $X$ is unconditionally convergent if
for every permutation $\pi: \mathbb N\to \mathbb N$ of the natural numbers
the series $\sum_{n=1}^\infty x_{\pi(n)}$ is convergent too.
It is the same as the convergence of the series $\sum_{n=1}^\infty b_n x_n$ for
every bonded sequence $(b_n)$ (see \cite{DiASO}, 1.9). An operator $T: X\to Y$ is said
to be absolutely summing if it takes any unconditionally convergent series in $X$ to a absolutely
convergent series in $Y.$ A famous Dvoretzky-Rogers theorem says that the identity map in $X$
is absolutely summing iff the space $X$ is finite dimensional (see, e. g.,\cite{DiASO}, 1.2).

Example: An inclusion $j: C(K)\to L_1(K,\mu),$ where $\mu$ is a probability measure on a compact
set $K.$

An operator $T:X\to Y$ is said to be $2$-absolutely summing if there is a constant $C>0$ such that for every
finite sequence $(x_n)_1^N\subset X$ one has
$$
\sum_{n=1}^N ||Tx_n||^2\le C^2\sup_{||x'||\le1}\left|\sum_{n=1}^N|\<x_n, x'\>|^2\right|.
$$
The set $\Pi_2(X,Y)$ of all such operators is Banach with a norm $\pi_2(T)=\inf C.$

Examples: 1)\, An inclusion $j: C(K)\to L_2(K,\mu),$ where $\mu$ is a probability measure on a compact
set $K.$ 2)\, $\Pi_2(H,H)=S_2(H,H).$ 3)\, Any operator from $C(K)$ to $M(K)=C^*(K)$ is  $2$-absolutely summing.

Generally, let $0<r<\infty.$
An operator $T:X\to Y$ is said to be $r$-absolutely summing if there is a constant $C>0$ such that for every
finite sequence $(x_n)_1^N\subset X$ one has
$$
\sum_{n=1}^N ||Tx_n||^r\le C^r\sup_{||x'||\le1}\left|\sum_{n=1}^N|\<x_n, x'\>|^r\right|.
$$
The set $\Pi_r(X,Y)$ of all such operators is (quasi)Banach with a (quasi)norm $\pi_r(T)=\inf C.$



\subsection{Lorentz-Schatten classes of operators in Hilbert spaces}

The Lorentz-Schatten class $S_{p,q},$ $0<p,q<\infty,$ considered for the first time by H. Triebel in \cite{Tri67},   
can be defined in the following way.
Let $U$ be a compact operator in a Hilbert space $H$ and $(s_n)$  is the sequence of its
singular numbers  (see, e. g., \cite{EigPie}, 2.1.13).
An operator  $U$ belongs to the space $S_{p,q}(H),$
if $(s_n)\in l_{p,q}.$
(see, e. g., \cite{EigPie}, 2.11.15).
The space
$S_{p,q}(H)$ has a natural quasi-norm
$$\sigma_{p,q}(U)=||(s_n)||_{p,q}=\left(\sum_{n=1}^\infty n^{(q/p)-1} s_n^q\right)^{1/q}.$$
If $p=q,$ then $S_{p,p}$ coincides with the class $S_p$ (with a quasi-norm $\sigma_p).$
 Let us mention that, for $p,q\in (0,1],$ we have the equality
$N_{p,q}(H)=S_{p,q}(H)$ (see, e. g., \cite{HiPi}) and
the inclusions
$S_{p,q}\subset S_{p,q'},$ if $0<p<\infty$ and $0<q\le q'<\infty$ or $S_{p,q}\subset S_{p',q'}$ if
$ 0<p<p'<\infty, 0<q,q'<\infty$
(see~\cite{Tri67}, Lemma 2) and
$$
S_{p,q}\circ S_{p',q'}\subset S_{s,r},\ 1/p+ 1/p'=1/s,\, 1/q+1/q'=1/r.
$$
Moreover, if $V\in S_{p,q}$ and $U\in S_{p',q'},$ then
$\sigma_{s,r}(UV)\le 2^{1/s}\sigma_{p',q'}(U)\, \sigma_{p,q}(V)$
(see~\cite{Pie80}, p. 155).
In the case where $p=q, p'=q',$ one has the constant $1$ instead of  $2^{1/s}$ in the last inequality
\cite{Horn}, \cite{EigPie}, p. 128, \cite{BirSol}, p.262.   

Examples of the $S_{p,q}$-operators are the diagonal operators $D$
in $l_2$ with  the diagonals $(d_n)$ from $l_{p,q};$ in such cases we write $D=(d_n).$

Given two complex Hilbert spaces $H_1$ and $H_2$, we denote by $H_1\otimes_2 H_2$
the completion of the tensor product $H_1\otimes H_2$ with respect to the
natural scalar product.


\section{On a Pisier's result}\label{2}

In this section we  are going to prove some generalizations of the Pisier's theorem mentioned
in Introduction to the cases of $S_{p}$-factorizations of operators for  scalar cases.
Some applications are given

\begin{definition}
An operator $T\in L(X,Y)$ is said to be $r$-nuclear, where $0<r\le1,$ if it admits a representation
\begin{equation} \label{Nsr}
Tx= \sum_{n=1}^\infty \mu_n \langle x'_n,x\rangle  y_n, \ \text{ for }\ x\in X,
\end{equation}
where $(x_n)\in X, (y_n)\in Y$ $||x_n||\le1, ||y_n||\le1$ and $(\mu_n)\in l_{r}.$
We put $\nu_{r}(T):=\inf ||(\mu_n)||_{l_{r}},$ where the infimum is taken over all possible
factorizations of $T$ in the form (\ref{Nsr}).
\end{definition}
It is clear that we can assume that ${\mu_n}$  are real and non-negative.
With the quasi-norm $\nu_{r},$ the space $N_{r}(X,Y)$ of all   $r$-nuclear operator
from $X$ to $Y$ is a complete quasi-normed space. We need the following well known fact.
The proof is given for completeness.

\begin{proposition}\label{Pr1}
If $T\in N_{r}(X,Y)$ $(0<r\le1),$
then $T$
can be factored through an operator from $S_{v}(H),$
where $1/v=1/r-1.$
Moreover, $\gamma_{S_{v}}(T)\le \nu_{r}(T)$
\end{proposition}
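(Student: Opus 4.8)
The plan is to derive the factorization directly from an $r$-nuclear representation of $T$, using the classical Grothendieck splitting of the coefficient sequence and choosing the splitting exponents so that the ``middle'' diagonal operator lands exactly in $S_v(H)$ with $1/v = 1/r-1$. The whole proof is essentially one explicit construction followed by three norm estimates.

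First I would fix a representation $Tx=\sum_{n=1}^\infty \mu_n\langle x'_n,x\rangle y_n$ with $\|x'_n\|\le1$, $\|y_n\|\le1$, $\mu_n\ge0$ and $(\mu_n)\in l_r$, discarding the indices with $\mu_n=0$. Put $H=l^2$ and define
$$B:X\to l^2,\quad Bx:=\big(\mu_n^{r/2}\langle x'_n,x\rangle\big)_n;\qquad A:l^2\to Y,\quad A(\xi_n):=\sum_{n=1}^\infty \xi_n\,\mu_n^{r/2}\,y_n;$$
and let $D:=(\mu_n^{1-r})$ be the diagonal operator on $l^2$. The only point that genuinely needs thought is the choice of the exponent $r/2$ on both sides: it is forced by requiring simultaneously that $(\mu_n^{r/2})\in l_2$ — so that $A$ and $B$ are bounded, via Cauchy–Schwarz together with $|\langle x'_n,x\rangle|\le\|x\|$ — and that the leftover diagonal $(\mu_n^{\,1-2\cdot(r/2)})=(\mu_n^{1-r})$ lie in $l_v$. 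Since $(1-r)v=r$, one has $\sum_n(\mu_n^{1-r})^v=\sum_n\mu_n^r<\infty$, so $D\in S_v(l^2)$, being a diagonal operator with diagonal in $l_v$ as recalled among the preliminaries.

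Then I would verify $T=ADB$ by applying the three maps in turn:
$$ADBx=A\big((\mu_n^{1-r}\mu_n^{r/2}\langle x'_n,x\rangle)_n\big)=\sum_{n=1}^\infty \mu_n^{1-r}\mu_n^{r/2}\mu_n^{r/2}\langle x'_n,x\rangle\, y_n=\sum_{n=1}^\infty\mu_n\langle x'_n,x\rangle y_n=Tx.$$
By Cauchy–Schwarz $\|B\|\le\big(\sum_n\mu_n^r\big)^{1/2}=\|(\mu_n)\|_{l_r}^{r/2}$ and likewise $\|A\|\le\|(\mu_n)\|_{l_r}^{r/2}$; moreover $\sigma_v(D)=\big(\sum_n\mu_n^r\big)^{1/v}=\|(\mu_n)\|_{l_r}^{r/v}=\|(\mu_n)\|_{l_r}^{1-r}$ since $r/v=r(1/r-1)=1-r$. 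Multiplying the three estimates,
$$\|A\|\,\sigma_v(D)\,\|B\|\le\|(\mu_n)\|_{l_r}^{\,r/2+(1-r)+r/2}=\|(\mu_n)\|_{l_r},$$
whence $\gamma_{S_v}(T)\le\|(\mu_n)\|_{l_r}$; taking the infimum over all $r$-nuclear representations of $T$ yields $\gamma_{S_v}(T)\le\nu_r(T)$.

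I do not expect a serious obstacle: this is the ``soft'' half of the $r$-nuclear $\Rightarrow$ $S_v$-factorization circle of ideas, and the argument is the computation above. The only things to keep an eye on are the degenerate case $r=1$, where $v=\infty$, $D=\id$, and the factorization reduces to one through the identity of a Hilbert space — consistent with the convention for $S_\infty$ used in the Introduction — and the fact that for $v<1$ the quantity $\sigma_v$ is merely a quasi-norm, which is harmless here since every inequality used is a multiplicative one that remains valid for quasi-norms.
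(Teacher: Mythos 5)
Your proof is correct and follows essentially the same route as the paper's: the same splitting $\mu_n=\mu_n^{r/2}\cdot\mu_n^{1-r}\cdot\mu_n^{r/2}$, with the outer factors absorbed into the bounded maps into and out of $l^2$ and the middle diagonal $(\mu_n^{1-r})$ landing in $S_v$ because $(1-r)v=r$. The only cosmetic difference is that the paper routes the factorization explicitly through $l_\infty\to l_2\to l_2\to l_1$ and bounds the outer diagonals via their $2$-summing norms, whereas you fold those diagonals directly into $A$ and $B$ and use Cauchy--Schwarz; the estimates and the resulting bound $\gamma_{S_v}(T)\le\nu_r(T)$ are identical.
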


\begin{proof}
$ T: X \to Y $ admits the following factorization:
\begin{equation}\label{Eq1}
  T:\, X\overset{W}\to l_\infty\overset{\Delta}\to l_1\overset{V}\to Y,
   \end{equation}
  where $||V||=||W||=1$ and $\Delta$ is a diagonal operator with a diagonal $(d_n)\in l_{r}.$
  Indeed, it is enough to put
  $Wx:= (\langle x'_k,x\rangle),$ $V(\alpha_n):= \sum \alpha_n y_n$ and $\Delta (\beta_n):= (d_n\beta_n)$
  (where $d_n:=\mu_n).$
rewrite the  factorization (\ref{Eq1}) as follows:
  \begin{equation}\label{Eq2}
  T:\, X\overset{W}\to l_\infty\overset{\Delta_1}\to l_2
     \overset{\Delta_0}\to l_2
\overset{\Delta_2}\to  l_1\overset{V}\to Y,
  \end{equation}
where $\Delta_1:=(\sqrt{d_n^r}),$   $\Delta_2:=(\sqrt{d_n^r})$ and
$\Delta_0:=(d_n^{1-r}).$

Suppose that $\varepsilon>0$ and in the factorization (\ref{Eq1}) $||V||=||W||=1$ and $||(d_n)||_{l_{r}}\le (1+\varepsilon) \nu_{r}(T).$
Then
\begin{eqnarray}
||\Delta_2|| & = & ||\Delta_1||\le \pi_2(\Delta_1)\le ||(\sqrt{d_n^r})||_{l_2}  \label{Eq3} \\
{} & = & ||(d_n^r)||^{1/2}_{l_1}\le [(1+\varepsilon) \nu_{r}(T)]^{r/2}. \nonumber
  \end{eqnarray}
Also
$\Delta_0\in S_{v}(l_2),$ where  $1/v=1/r-1.$
Moreover, since  $1-r=r/v,$ 
we have
    \begin{eqnarray}
 \sigma_{v}(\Delta_0) & = & \left(\sum d_n^{1-r}]^v\right)^{1/v}  \label{Eq4} \\ 
   {} & = &  \left(\sum [d_n]^{r}\right)^{1/v}\le [(1+\varepsilon) \nu_{r}(T)]^{1-r}.  \nonumber
   \end{eqnarray}
\end{proof}


\begin{remark}\label{Rem1}
As a matter of fact we see that $T=A\Delta_0B,$ where $A\in \Pi_2^{dual}(l_2,Y),$
$B\in \Pi_2(X,l_2)$ and $\Delta_0\in S_v(l_2, l_2).$
\end{remark}

\begin{example}\label{Ex1}
Let $T\in N_r(X,Y)$ and $U\in N_1(Y,X),$ where $0<r\le1.$
We have the diagram
$$
UT:\, X\overset{B_2}\to l_2\overset {\Delta_0}\to l_2\overset{A_2}\to Y
 \overset{B_1}\to l_2\overset{A_1}\to X,
$$
where $B_2, B_1\in\Pi_2$ and $\Delta_0\in S_v(l_2),$ $1/v=1/r-1.$
Eigenvalues of $UT$ are the same as ones of the operator $V:= B_2A_1B_1A_2\Delta_0:$
$$
l_2\overset {\Delta_0}\to l_2\overset{A_2}\to Y
 \overset{B_1}\to l_2\overset{A_1}\to X \overset{B_2}\to l_2.
$$
Since $B_2, B_1\in\Pi_2,$ we have $ B_2A_1B_1A_2\in S_1(l_2).$
Therefore, $V\in S_r$ $(1+1/v=1/r).$
Thus, the sequence of all eigenvalues of $UT$ lies in $l_r.$
\end{example}

\begin{remark}
It can be shown that if $U\in N_p(Y,X)$ and $0<r,p<1$ in Example \ref{Ex1}, then eigenvalues of $UT$
belong to the Lorentz space $l_{s,q}$ with $1/s=1/r+1/p-1$ and $1/q=1/r+1/p.$  Example \ref{Ex1}
shows the specificity of the particular case $p=1.$
\end{remark}
We are going to show that the results from Proposition \ref{Pr1} and Example \ref{Ex1} are sharp.
For this we need the following first generalization of the Pisier result, mentioned
in Introduction.


\begin{theorem}\label{ThP1}
Let $f\in C(G),$  $0<s\le1$ and  $1/r=1/s-1.$ Consider a convolution operator  
$\star f: M(G)\to C(G).$
The set  of  Fourier coefficients $\hat f$ belongs to $l_{s}$
if and only if the operator $\star f$ can be factored through a
Schatten $S_{r}$-operator in a Hilbert space.
\end{theorem}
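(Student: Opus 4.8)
The plan is to prove the two implications separately, the substantial one being ``$\star f$ factors through an $S_{r}$-operator $\Rightarrow\hat f\in l_{s}$'', which is the $S_r$-analogue of Pisier's original argument.

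\emph{Sufficiency.} If $\hat f\in l_{s}$ I would first observe that $\star f$ is $s$-nuclear. For each $\gamma\in\Gamma$ the map $\mu\mapsto\hat\mu(\gamma)$ is a functional on $M(G)$ of norm $\le1$, $||\gamma||_{C(G)}=1$, and $\widehat{f\star\mu}(\gamma)=\hat f(\gamma)\hat\mu(\gamma)$ for every $\mu\in M(G)$. Since $\hat f\in l_s\subset l_1$ (so its support is countable), the series $\sum_{\gamma\in\Gamma}\hat f(\gamma)\hat\mu(\gamma)\gamma$ converges absolutely in $C(G)$, and by uniqueness of Fourier coefficients on $C(G)$ its sum equals $f\star\mu$; hence
$$
\star f=\sum_{\gamma\in\Gamma}\hat f(\gamma)\,\big(\mu\mapsto\hat\mu(\gamma)\big)\otimes\gamma
$$
is an $s$-nuclear representation, so $\star f\in N_{s}(M(G),C(G))$ with $\nu_{s}(\star f)\le||\hat f||_{l_{s}}$. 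Proposition~\ref{Pr1} then factors $\star f$ through an operator of $S_{v}(H)$ with $1/v=1/s-1=1/r$, i.e. through an $S_r$-operator.

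\emph{Necessity.} Suppose $\star f=\alpha U\beta$ with $\beta:M(G)\to H$, $U\in S_{r}(H)$, $\alpha:H\to C(G)$. Let $j:C(G)\to L_{2}(G)$ and $\iota:L_{2}(G)\to M(G)$ be the natural norm-one inclusions (recall $m$ is a probability). The key observation is that $j\,\alpha\,U\,\beta\,\iota:L_{2}(G)\to L_{2}(G)$ is exactly convolution by $f$ on $L_2(G)$: indeed $\alpha U\beta\iota=(\star f)\,\iota$ is convolution $L_2(G)\to C(G)$, and since the characters form an orthonormal basis of $L_{2}(G)$ and $(\star f)(\gamma\,dm)=\hat f(\gamma)\,\gamma$ for each $\gamma\in\Gamma$, the operator $j\alpha U\beta\iota$ is the diagonal operator with diagonal $(\hat f(\gamma))_{\gamma\in\Gamma}$, whose singular numbers are the non-increasing rearrangement of $(|\hat f(\gamma)|)_{\gamma\in\Gamma}$. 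On the other hand $j$ is $2$-summing (the inclusion $C(K)\to L_2(K,\mu)$ recalled in the Preliminaries), and, $M(G)$ being an $\mathcal L_1$-space, $\beta$ is $2$-summing by Grothendieck's theorem; since $2$-summing operators between Hilbert spaces are precisely the Hilbert--Schmidt ones (cf. $\Pi_2(H,H)=S_2(H,H)$), we get $j\alpha,\ \beta\iota\in S_2$. Hence, by the Lorentz--Schatten multiplication rule,
$$
j\alpha U\beta\iota=(j\alpha)\,U\,(\beta\iota)\in S_{2}\circ S_{r}\circ S_{2}\subset S_{s},\qquad \tfrac1s=\tfrac12+\tfrac1r+\tfrac12 .
$$
Since a diagonal operator lies in $S_s$ precisely when its diagonal lies in $l_s$, this yields $(|\hat f(\gamma)|)_{\gamma\in\Gamma}\in l_{s}$, i.e. $\hat f\in l_{s}$.

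I do not expect a serious obstacle, the argument being a combination of known facts; the only genuine idea is the sandwich decomposition $1/s=1/2+1/r+1/2$ together with the two canonical $2$-summing inclusions. The points that need care are: the identification of $j\alpha U\beta\iota$ with the $L_2$-convolution operator, which rests on $\widehat{f\star\mu}=\hat f\,\hat\mu$ and the uniqueness theorem for Fourier coefficients; and invoking the correct form of Grothendieck's theorem, namely that every bounded operator from the $\mathcal L_1$-space $M(G)$ into a Hilbert space is $2$-summing. The borderline case $s=1$, where $r=\infty$ and $U$ is merely compact, is covered as well: then $(j\alpha)U\in S_2$ by the ideal property, so $(j\alpha)U(\beta\iota)\in S_2\circ S_2\subset S_1$; this recovers Pisier's theorem.
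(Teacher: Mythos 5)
Your argument is correct, and its core in the hard direction --- sandwiching the $S_r$-operator $U$ between two Hilbert--Schmidt maps coming from Grothendieck's theorem ($M(G)\to H$ is $2$-summing) and the $2$-summing inclusion $C(G)\to L_2(G)$, then applying the multiplication rule $S_2\circ S_r\circ S_2\subset S_s$ with $1/s=1/2+1/r+1/2$ --- is exactly the paper's computation (the paper groups it as $S_1\circ S_r$, which is the same thing). The two write-ups differ only in the bookkeeping at either end. For necessity, the paper extracts $\hat f$ as the eigenvalue sequence of $(\star f)\circ j$ on $C(G)$, rotates to the product $BjAU$ (same eigenvalues) and then needs the Weyl-type fact that the eigenvalues of an $S_s$-operator lie in $l_s$; you instead conjugate into $L_2(G)$, where $j\alpha U\beta\iota$ is visibly the diagonal operator with diagonal $(\hat f(\gamma))$ in the character basis, and read off $l_s$ from its singular numbers --- marginally more elementary, since no eigenvalue theorem is invoked, only the fact that the singular numbers of a diagonal operator are the rearranged moduli of its diagonal (your justification via $(\star f)(\gamma\,dm)=\hat f(\gamma)\gamma$ and orthonormality of the characters is sound). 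For sufficiency, the paper writes down an explicit factorization with $U$ the diagonal operator $(|\hat f(\gamma_n)|^{1-s})\in S_r$ flanked by two bounded maps built from $|\hat f(\gamma_n)|^{s/2}$; you instead observe that $\hat f\in l_s$ makes $\star f$ an $s$-nuclear operator and then quote Proposition \ref{Pr1}, which is precisely the route the paper itself takes in Corollary \ref{CP1} (the chain 2)$\implies$3)$\implies$1)), so nothing is lost. Your treatment of the endpoint $s=1$, $r=\infty$ via the ideal property is also fine.
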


\begin{proof}
1)\,
Let there exists $U\in S_{r}(H)$ such that 
$$
\star f=AUB:\, M(G)\overset B\to H \overset U\to H \overset A\to C(G).
$$
If $j: C(G)\hookrightarrow M(G)$ is a natural injection, then the Fourier coefficients of  $f$ are the eigenvalues of
the operator $AUBj: C(G)\to M(G)\to C(G).$
Consider a diagram
$$
C(G)\overset j\hookrightarrow M(G) \overset B\to H \overset U\to H \overset A\to C(G)\overset j\hookrightarrow M(G)\overset B \to H.
$$
The operators $AUBj$ and $BjAU$ have the same sequences of eigenvalues.
Since $B\in \Pi_2(M(G),H)$ $j: C(G)\hookrightarrow L_2(G)\hookrightarrow M(G)\in \Pi_2(C(G), M(G))$ and $U\in S_{r},$
we get that
$$
({*})\ \ BjAU\in S_{r}\circ S_1\subset S_{s},
$$
where $1/s=1+1/r.$ Therefore, the eigenvalues of $AUBj$ lie in $l_s.$ So
$\{\hat f(\gamma)\}\in l_{s}.$

2)\,
Suppose that
$\{\hat f(\gamma)\}\in l_{s},$ where $1/s=1+1/r.$ Let $\{c_{\gamma_n}=|\hat f(\gamma_n)|\}$
(non-zero part of $\hat f).$
Consider the operators
$B: M(G)\to L_2(G),$ $U: L_2(G)\to L_2(G)$ and $A: L_2(G)\to C(G),$
defined by
$$
B\mu:= \sum_n\hat\mu(\gamma_n) c_{\gamma_n}^{s/2} \gamma_n,\
 \, U\varphi:= \sum_n \hat{\varphi}(\gamma_n) c_{\gamma_n}^{1-s} \gamma_n
$$
and
$$
A\psi:= \sum_n \hat{\psi}(\gamma_n) \text{  sign} {\hat f(\gamma_n)} c_{\gamma_n}^{s/2} \gamma_n
$$
The operators are well defined since
the series
$$\sum \hat{\psi}(\gamma_n) \text{  sign} {\hat f(\gamma_n)} c_{\gamma_n}^{s/2} \gamma_n$$
is absolutely convergent. 

We have
$$
\star f=AUB:\, M(G)\overset B\to l_2 \overset U\to l_2 \overset A\to C(G),
$$
where $A, B$ are bounded and $U$ is from $S_{r}(l_2).$
\end{proof}

\begin{corollary}\label{CP1}
Let $f\in C(G),$  $0< s\le1$ and  $1/r=1/s-1.$
 For the convolution operator
$\star f: M(G)\to C(G),$
The following are equivalent:

1.\,
$\star f$ can be factored through a
Schatten $S_{r}$-operator;

2.\,
$\hat f\in l_{s};$

3.\,
$\star f\in N_{s} (M(G), C(G)).$
\end{corollary}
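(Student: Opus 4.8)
The plan is to prove the cycle of implications $2\Rightarrow 3\Rightarrow 1\Rightarrow 2$. The equivalence $1\Leftrightarrow 2$ is already Theorem \ref{ThP1} (part~1) gives $1\Rightarrow 2$ and part~2) gives $2\Rightarrow 1$), so the only genuinely new work is to slot condition $3$ in between, that is, to verify $2\Rightarrow 3$ and $3\Rightarrow 1$; the implication $1\Rightarrow 2$ will then simply be quoted from Theorem \ref{ThP1}.

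For $2\Rightarrow 3$ I would write down an explicit $s$-nuclear representation of $\star f$. Let $\{\gamma_n\}$ list the characters with $\hat f(\gamma_n)\neq 0$. Since $\widehat{f\star\mu}(\gamma)=\hat f(\gamma)\,\hat\mu(\gamma)$ and $|\hat\mu(\gamma_n)|\le||\mu||$, $||\gamma_n||_\infty=1$, and $\hat f\in l_s\subset l_1$ (as $s\le1$), the series $\sum_n\hat f(\gamma_n)\,\hat\mu(\gamma_n)\,\gamma_n$ converges absolutely in $C(G)$ and, by uniqueness of Fourier coefficients, equals $f\star\mu$. Observing that $\hat\mu(\gamma_n)=\int_G\overline{\gamma_n}\,d\mu=\langle\overline{\gamma_n},\mu\rangle$ with $\overline{\gamma_n}\in C(G)\subset M(G)^*$ and $||\overline{\gamma_n}||=1$, we obtain
$$
\star f(\mu)=\sum_n\hat f(\gamma_n)\,\langle\overline{\gamma_n},\mu\rangle\,\gamma_n ,
$$
which is a representation of the form (\ref{Nsr}) with coefficient sequence $(\hat f(\gamma_n))\in l_s$. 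Hence $\star f\in N_s(M(G),C(G))$ and, moreover, $\nu_s(\star f)\le||\hat f||_{l_s}$.

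The implication $3\Rightarrow 1$ is then immediate from Proposition \ref{Pr1}: every $s$-nuclear operator factors through an operator of the Schatten class $S_v$ with $1/v=1/s-1$, and the hypothesis $1/r=1/s-1$ forces $v=r$; thus $\star f$ factors through an $S_r$-operator in a Hilbert space, with $\gamma_{S_r}(\star f)\le\nu_s(\star f)\le||\hat f||_{l_s}$. Combined with $1\Rightarrow 2$ from Theorem \ref{ThP1}, this closes the cycle.

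I do not anticipate any real obstacle here: the corollary is essentially a repackaging of Theorem \ref{ThP1} together with Proposition \ref{Pr1}, and the only thing to watch is the bookkeeping of the indices --- noting that the relation $1/r=1/s-1$ of the statement is the same as the relation $1/s=1+1/r$ appearing in Theorem \ref{ThP1} and makes the exponent $v$ of Proposition \ref{Pr1} equal to $r$. One could also obtain $3\Rightarrow 2$ directly, by the eigenvalue/trace computation used in part~1) of Theorem \ref{ThP1} (the Fourier coefficients of $f$ being the eigenvalues of $\star f\circ j$ with $j\colon C(G)\hookrightarrow M(G)$), but passing through condition $1$ is shorter.
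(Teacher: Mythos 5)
Your proposal is correct and follows essentially the same route as the paper: the paper also obtains $3\Rightarrow 1$ by quoting Proposition \ref{Pr1}, gets $1\Rightarrow 2$ from Theorem \ref{ThP1}, and proves $2\Rightarrow 3$ via the factorization $M(G)\to l_\infty\overset{\Delta}\to l_1\to C(G)$ with diagonal $(|\hat f(\gamma)|)\in l_s$, which is just the diagrammatic form of the explicit $s$-nuclear representation $\star f(\mu)=\sum_n\hat f(\gamma_n)\langle\overline{\gamma_n},\mu\rangle\gamma_n$ you wrote down. No gaps; the index bookkeeping $1/r=1/s-1$ matches the paper's usage.
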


\begin{proof}
3)$\implies$1) is valid for any $s$-nuclear operator (Proposition \ref{Pr1}).

2)$\implies$3).
It is enough to consider the diagram
$$\star f=AUB:\, M(G)\overset B\to l_\infty \overset \Delta\to l_1 \overset A\to C(G),$$
where
$B\mu:= \{\hat\mu(\gamma)\},$
$\Delta \{a_\gamma\}:= \{|\hat f(\gamma)| a_\gamma\},$
$A \{b_\gamma\}:= \sum_\gamma \text{ sign} \hat f(\gamma) b_\gamma \gamma.$
\end{proof}


It follows from the above corollary  that the result of Proposition \ref{Pr1} is sharp.

We now give an application to the products of two nuclear operators .
A. Grothendieck \cite{Gr55} proved that the eigenvalue sequence of a product of
two nuclear operators is absolutely summable. The following corollary shows
that the result is sharp in the scale $l_{r,s}.$

\begin{corollary}\label{NN}
There exist two nuclear operators $t_1$ and $t_2$ whose product has the eigenvalues in
$l_1\setminus \cup_{s<1} l_{1,s}.$ 
\end{corollary}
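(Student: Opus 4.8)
The plan is to exhibit the two nuclear operators concretely by convolution operators on the circle, exploiting Theorem \ref{ThP1} (or Corollary \ref{CP1}) in the limiting case $s=1$, $r=\infty$. Recall that for $s=1$ the exponent $1/r=1/s-1=0$, i.e. $r=\infty$, so Theorem \ref{ThP1} specializes to the original Pisier statement: $\hat f\in l_1$ iff $\star f:M(\mathbb T)\to C(\mathbb T)$ factors through a bounded ($S_\infty$-) operator in a Hilbert space. I would first fix, via the Kahane--Katznelson--de Leeuw theorem \cite{KKL} quoted in the Introduction, a function $F\in C(\mathbb T)$ whose Fourier coefficients dominate a prescribed sequence $a\in l_2(\mathbb Z)\setminus\bigcup_{s<2}l_{2,s}$; then $\hat F\in l_2$ but $\hat F\notin l_{2,s}$ for any $s<2$. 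Since $\hat F\in l_2$, Corollary \ref{CP1} with $s=1/2$ ($1/r=1/s-1=1$, so $r=1$) shows the convolution operator $\star F:M(\mathbb T)\to C(\mathbb T)$ is $1/2$-nuclear, in particular nuclear. I will take $t_1=t_2=j\circ(\star F)$ or, more symmetrically, $t_1=\star F:M\to C$ and $t_2=j:C\hookrightarrow M$ composed suitably; the cleanest choice is to set $t_1=t_2=\star g$ where $g\in C(\mathbb T)$ is chosen so that $\hat g$ is the "square root" of $\hat F$, i.e. $\widehat{g\star g}=\hat F$ up to the normalization of convolution — but one must be careful that $\hat g=(\hat F)^{1/2}\in l_4$ need not make $\star g$ nuclear.

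So the safer route, which I would actually follow, is: let $t_1:=\star F:M(\mathbb T)\to C(\mathbb T)$ and $t_2:=j:C(\mathbb T)\hookrightarrow M(\mathbb T)$, the canonical inclusion. Here $t_1$ is nuclear (indeed $\tfrac12$-nuclear) by the above, and $t_2$ is nuclear — in fact it is even $\tfrac12$-nuclear — because $j=L_1\text{-inclusion}\circ(\text{something})$; more directly, $j$ factors through $L_1(\mathbb T)$ and the inclusion $C(\mathbb T)\hookrightarrow L_1(\mathbb T)$ composed with $L_1(\mathbb T)\hookrightarrow M(\mathbb T)$, and one checks a Fourier-diagonal representation exhibiting $j$ as nuclear with rapidly decreasing "multipliers"; alternatively one simply composes with a fixed smooth kernel. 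Then the product $t_1t_2=\star F\circ j: C(\mathbb T)\to C(\mathbb T)$ is the convolution operator $\star F$ viewed on $C(\mathbb T)$, whose complete eigenvalue sequence is exactly $(\hat F(n))_{n\in\mathbb Z}$ — this is precisely the Grothendieck--Carleman observation recalled in the Introduction (the characters are eigenvectors). By Grothendieck's theorem this sequence lies in $l_1$; but by construction it dominates a sequence that lies in no $l_{1,s}$ with $s<1$? No — $\hat F\in l_2$, which is weaker than $l_1$.

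Here is the actual obstacle, and how I resolve it: I must arrange $\hat F\in l_1\setminus\bigcup_{s<1}l_{1,s}$, not merely $\hat F\in l_2$. The remedy is to use a genuinely nuclear (not just $\tfrac12$-nuclear) building block together with a convolution of two such functions. Concretely, pick $a\in l_1(\mathbb Z)\setminus\bigcup_{s<1}l_{1,s}$. By Cauchy--Schwarz write $a_n=b_nc_n$ with $b,c\in l_2(\mathbb Z)$; apply Kahane--Katznelson--de Leeuw twice to get $g_1,g_2\in C(\mathbb T)$ with $|\hat g_1(n)|\ge|b_n|$, $|\hat g_2(n)|\ge|c_n|$, and (replacing $\hat g_i$ by $\hat b,\hat c$ on the support, multiplying by unimodular factors) arrange that the convolution satisfies $\widehat{g_1\star g_2}(n)=a_n$ for all $n$. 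Since $\hat g_1,\hat g_2\in l_2$, each $\star g_i:M(\mathbb T)\to C(\mathbb T)$ is $\tfrac12$-nuclear by Corollary \ref{CP1}, hence in particular nuclear; and the composition $t_1t_2$, where $t_1=\star g_1:M\to C$, $t_2=j\circ(\star g_2):C\to M$ (with $j$ the inclusion $C(\mathbb T)\hookrightarrow M(\mathbb T)$, which is bounded, so $t_2$ is nuclear as a composition of a nuclear operator with a bounded one), equals the convolution operator $\star(g_1\star g_2)$ on $C(\mathbb T)$ up to normalization; its complete eigenvalue sequence is $(\widehat{g_1\star g_2}(n))_n=(a_n)_n\in l_1\setminus\bigcup_{s<1}l_{1,s}$. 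This proves the corollary. The one delicate point is checking that $t_1 t_2$ really is (a scalar multiple of) $\star(g_1\star g_2)$ acting on $C(\mathbb T)$ with the asserted eigenvalues — this is immediate from $\widehat{g_1\star g_2}=\hat g_1\hat g_2$ and the fact that characters are simultaneous eigenvectors of all convolution operators, together with the metric approximation property of $C(\mathbb T)$ ensuring the trace/eigenvalue bookkeeping has no gaps. Verifying that Grothendieck's $l_1$-theorem indeed applies (so that we genuinely sit on the boundary $l_1$ rather than outside it) is automatic here since $a$ was chosen in $l_1$.
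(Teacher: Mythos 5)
Your construction does not work, for two reasons, one of which is structural. First, the key nuclearity claim is false: you assert that $\widehat F\in l_2$ makes $\star F\colon M(\mathbb T)\to C(\mathbb T)$ $\tfrac12$-nuclear ``by Corollary \ref{CP1} with $s=1/2$,'' but that corollary requires $\widehat F\in l_{1/2}$, not $\widehat F\in l_2$. In fact the opposite is true: by Theorem \ref{ThP1} (Pisier's direction, $s=1$), if $\star F\colon M(G)\to C(G)$ is nuclear --- or even factors through a bounded Hilbert-space operator --- then necessarily $\widehat F\in l_1$. So your building blocks $\star g_1,\star g_2$ with $\widehat g_i\in l_2\setminus l_1$ are not nuclear, and $t_1,t_2$ as you define them are not nuclear either. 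Second, and more fundamentally, the strategy of taking \emph{both} nuclear factors to be convolution operators cannot succeed: once $t_1=\star g_1\colon M\to C$ is nuclear you are forced to have $\widehat g_1\in l_1$, and the eigenvalue sequence of the product is the pointwise product $\widehat g_1\widehat g_2$, which then lies in $l_1\cdot l_2\subset l_{2/3}\subset l_{1,2/3}\subset\bigcup_{s<1}l_{1,s}$ --- precisely the set you must avoid. No choice of $g_1,g_2$ can escape this. (A smaller point: the Kahane--Katznelson--de Leeuw step where you ``replace $\widehat g_i$ by $b,c$ on the support'' is also unjustified, since shrinking or rotating Fourier coefficients of a continuous function need not preserve continuity; but KKL is not needed here at all, because for $a\in l_1$ the series $f=\sum_n a_n\gamma_n$ already converges uniformly to a continuous $f$ with $\widehat f=a$.)

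The paper's proof avoids the obstruction by using only \emph{one} convolution. Take $f\in C(G)$ with $\widehat f\in l_1\setminus\bigcup_{s<1}l_{1,s}$; then $T_1=\star f\colon M(G)\to C(G)$ is nuclear by Corollary \ref{CP1}, and the second factor is essentially the canonical inclusion $T_2\colon C(G)\hookrightarrow M(G)$, so that the product $T_2T_1$ has eigenvalue sequence equal to $\widehat f$ itself (the measures $\gamma\,dm$ are eigenvectors), not a product of two summable sequences. Since $T_2$ is only integral and not nuclear, one splits the diagonal factorization of $T_1$ as $T_1=kt_1$ with $t_1\colon M(G)\to l_1$ nuclear and $k\colon l_1\to C(G)$ compact, and sets $t_2:=T_2k$, which is nuclear as an integral operator composed with a compact one; then $t_2t_1=T_2T_1$ and both factors are genuinely nuclear. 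If you want to salvage your write-up, this redistribution of the compactness between the two factors is the idea you are missing.
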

\begin{proof}
Let $f\in C(G)$ with $\widehat f\in l_1\setminus \cup_{s<1} l_{1,s},$
$T_1=\star f: M(G)\to C(G)$ and $T_2:C(G)\to M(G)$ be a natural inclusion. 
By Corollary \ref{CP1}, the operator $T_1$ is nuclear. By using a natural factorization
of $T_1$ through a diagonal operator from $l_\infty$ to $l_1,$ we can represent $T_1$
as a product $kt_1$ of a nuclear operator $t_1: M(G)\to l_1$ and a compact operator $k: l_1\to C(G).$
Since $T_2$ is an  integral operator, the product operator $T_2k$ is nuclear. Put $t_2:=T_2k.$
Then, $t_1t_1=T_2T_1.$ The  sequence of the eigenvalues of $t_2t_1$ coincides
with $\widehat f.$
\end{proof}

\begin{remark}
Of course, the assertion of the corollary follows implicitly from the Pisier's theorem.
Also, note that Corollary \ref{CP1} gives us one more proof of the sharpness of the Grothendiek's theorem
about 
his factorization result for nuclear operators
(take $s=1$ and $f$ with $\widehat f\in l_1\setminus \cup_{s<1} l_{1,s}$).
\end{remark}

A slightly more general corollary (compare Example \ref{Ex1}):

\begin{corollary}
For every $r\in (0,1)$ there exist two nuclear operators $t_1\in N_r$ and $t_2\in N_1$ whose product has the eigenvalues in
$l_r\setminus \cup_{s<r} l_{r,s}.$ 
\end{corollary}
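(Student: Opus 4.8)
The plan is to mimic the proof of Corollary \ref{NN}, upgrading the scalar $r=1$ ingredients to their $r$-nuclear analogues. First I would invoke the Kahan--Katznelson--de Leeuw theorem (or, equivalently, the explicit example in \cite{K86}, Example 2.b.14) to produce a function $f\in C(G)$ with $\widehat f\in l_r\setminus\bigcup_{s<r}l_{r,s}$: indeed one chooses a scalar sequence $a\in l_r\setminus\bigcup_{s<r}l_{r,s}$ that is also square-summable (this is possible since $r<1<2$ forces $l_r\subset l_2$, and one can arrange $a\in l_r$ with $a\notin l_{r,s}$ for any $s<r$ by a logarithmic perturbation of $n^{-1/r}$), and then takes $F_a\in C(G)$ with $|\widehat{F_a}(\gamma_n)|\ge|a_n|$; truncating or composing with a Lorentz-space argument, one gets $f$ with Fourier coefficients exactly of the order of $a$, so that $\widehat f$ lies in $l_r$ but in no smaller $l_{r,s}$.

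Next I would set $T_1=\star f:M(G)\to C(G)$, which by Corollary \ref{CP1} (the equivalence $2)\Leftrightarrow 3)$) is $r$-nuclear, and $T_2:C(G)\hookrightarrow M(G)$ the natural inclusion, which is integral hence $1$-nuclear after composing with a suitable operator. As in the proof of Corollary \ref{NN}, I would factor $T_1$ through the natural diagonal operator $\Delta=(|\widehat f(\gamma)|):\,l_\infty\to l_1$, writing $T_1=k\circ t_1$ with $t_1:M(G)\to l_1$ an $r$-nuclear operator (the diagonal part carries the $l_r$-sequence) and $k:l_1\to C(G)$ compact (in fact bounded); then $t_2:=T_2\circ k:l_1\to M(G)$ is nuclear because $T_2$ is integral and composing an integral operator with a bounded operator into a space with the metric approximation property yields a nuclear operator. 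Thus $t_1\in N_r$, $t_2\in N_1$, and $t_2 t_1 = T_2 T_1$ on $M(G)$.

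Finally I would identify the eigenvalues: the operators $T_2T_1=T_2(\star f)$ on $M(G)$ and $T_1T_2=(\star f)\circ j$ on $C(G)$ (where $j=T_2$) have the same non-zero eigenvalues, and $(\star f)\circ j:C(G)\to C(G)$ is precisely the convolution $\star f$ acting on $C(G)$, whose eigenvalues are exactly the Fourier coefficients $\{\widehat f(\gamma)\}$ (the characters $\gamma$ are eigenfunctions with eigenvalue $\widehat f(\gamma)$). Hence the eigenvalue sequence of $t_2 t_1$ equals $\widehat f\in l_r\setminus\bigcup_{s<r}l_{r,s}$, as desired. The main obstacle I anticipate is the very first step: constructing (or citing precisely) a continuous function on $G$ whose Fourier coefficients realize a prescribed $l_r$-but-not-$l_{r,s}$ sequence; for a general compact abelian group one should reduce to $G=\mathbb T$ (or use the KKL-type theorem in the form available for general $G$) and then note that the eigenvalue statement only concerns the non-zero coefficients, so any function with coefficients dominating and dominated by $a_n$ up to constants suffices. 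Everything else is a routine repetition of the $r=1$ argument with $N_1$ replaced by $N_r$ and the composition rule $S_r\circ S_1\subset S_s$ (or the corresponding $r$-nuclear factorization of Proposition \ref{Pr1}) controlling the eigenvalue location.
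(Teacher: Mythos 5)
Your proposal follows the paper's proof exactly: the paper's entire argument for this corollary is ``take $f\in C(G)$ with $\widehat f\in l_r\setminus \cup_{s<r} l_{r,s}$ and proceed as in the proof of Corollary \ref{NN},'' and every ingredient you use --- Corollary \ref{CP1} to get $\star f\in N_r$, the factorization through the diagonal $l_\infty\to l_1$, and the identification of the eigenvalues of $(\star f)\circ j$ with the Fourier coefficients via the characters --- is the same (your detour through Kahane--Katznelson--de Leeuw is superfluous, since for $r\le 1$ one simply takes the absolutely convergent series $f=\sum a_n\gamma_n$). The one justification you should repair is the nuclearity of $t_2=T_2\circ k$: ``integral composed with a bounded operator into a space with the metric approximation property is nuclear'' is false (the inclusion $C(K)\hookrightarrow L_1(K,\mu)$ is integral into a space with MAP yet not nuclear), and the correct route --- the one implicit in the paper --- is to arrange $k$ to be genuinely compact by splitting off a null diagonal, and then invoke Grothendieck's theorem that the composition of an integral operator with a weakly compact one is nuclear.
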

\begin{proof}
Take $f\in C(G)$ with $\widehat f\in l_r\setminus \cup_{s<r} l_{r,s}$ and proceed as in the previous proof.
\end{proof}

\begin{remark}
The same can be done for the products of type $$N_1N_pN_1N_q\cdots N_1N_r.$$
\end{remark}


\section{Around Saab's theorem}\label{4}

We will get here two theorems which are in a sense 
 generalize the
main Saab's theorem from \cite{Sa8}.


Following \cite{Sa8},
for $\ove f\in C(G,X),$ define a convolution operator $\star \ove f:  M(G)\to C(G,X)$ as
$$
\star f(\mu)=\ove f*\mu(s):=\int_G \ove f(s-t)\, d\mu(t)\in X.
$$



\begin{definition}\label{D1}
$T\in L(X,Y)$ possesses the property $(\mathfrak l)$ if
for every $\ove f\in C(G,X)$ such that $\star \ove f$ can be factored through a Hilbert space 
$$
\star \ove f: M(G)\overset A\to H\overset B\to C(G,X)
$$
the family $T\widehat{\ove f}$ is absolutely summable.
\end{definition}


\begin{theorem}\label{ThS2}
For functions $\ove f\in C(G,X),$
consider the convolution operators
$\star \ove f: M(G)\to C(G,X)$ and let 
$T\in L(X,Y).$ Consider the following assertions:

1)\,
$T\in L(X,Y)$ possesses the property $(\mathfrak l).$

2)\,
$T\in \Pi_2(X,Y).$

3)\, If $d\in l_1^{weak}(X)\cap l_2(X),$ then $Td\in l_1(Y).$ 

We have:
$1) \implies 2)$  and $3) \implies 1).$
\end{theorem}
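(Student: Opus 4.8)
The plan is to treat the two implications $1)\implies 2)$ and $3)\implies 1)$ separately, since they go in opposite directions and use different inputs.

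For $3)\implies 1)$: Suppose $\ove f\in C(G,X)$ and $\star\ove f=BA$ factors through a Hilbert space, $M(G)\overset{A}\to H\overset{B}\to C(G,X)$. The goal is to show $(T\wh{\ove f}(\gamma))_{\gamma\in\Gamma}\in l_1(Y)$. First I would recover the Fourier coefficients as values of the composed operator on characters: for each $\gamma\in\Gamma$, the vector $\wh{\ove f}(\gamma)\in X$ is obtained by evaluating $\star\ove f$ at the point mass (or at $\gamma$ viewed suitably inside $M(G)$) and reading off the appropriate coordinate; concretely $\wh{\ove f}(\gamma)=\widehat{(\star\ove f(\nu_\gamma))}(\gamma)$ for a suitable $\nu_\gamma$, and the characters form a (weakly) suitably bounded family in $M(G)$. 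The key point is that, because $\star\ove f$ factors through $H$, the family $(A\gamma)_\gamma$ obtained by pushing the characters into $H$ is, after scalar normalization, a weakly-$2$-summable / Hilbert-summable family; the orthogonality of the characters in $L_2(G)\subset M(G)$ is what forces $(A\gamma)$ to lie in $l_2(H)$ (up to the obvious normalizing weights), exactly as in Pisier's argument. Then $(\wh{\ove f}(\gamma))$ is the image under the bounded operator (built from $B$ together with evaluation/Fourier-coefficient functionals on $C(G,X)$) of an $l_2(H)$-family, hence $(\wh{\ove f}(\gamma))\in l_1^{weak}(X)\cap l_2(X)$: the $l_2$-membership from the Hilbert-space factorization and the $l_1^{weak}$-membership because $\sum_\gamma|\<x', \wh{\ove f}(\gamma)\>|=\sum_\gamma|\widehat{\<x',\ove f\>}(\gamma)|$, which is controlled since $\<x',\ove f\>\in C(G)$ and the scalar convolution $\star\<x',\ove f\>$ factors through $H$ — so by the scalar Pisier theorem its Fourier coefficients are absolutely summable. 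Now hypothesis $3)$ applies verbatim: $T$ sends $l_1^{weak}(X)\cap l_2(X)$ into $l_1(Y)$, so $(T\wh{\ove f}(\gamma))\in l_1(Y)$, i.e.\ $T$ has property $(\mathfrak l)$.

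For $1)\implies 2)$: I would argue by contrapositive, or rather by producing, from an arbitrary weakly-$2$-summable sequence in $X$, a function $\ove f\in C(G,X)$ whose convolution operator factors through a Hilbert space and whose Fourier coefficients realize that sequence. The natural group to use is $G=\prod_n\mathbb Z_2$ (or $\mathbb T$) with its Rademacher-type characters. Given $(x_n)\in l_2^{weak}(X)$, set $\ove f=\sum_n r_n\ox x_n$ with $(r_n)$ an orthonormal system of characters and coefficients chosen so that the series converges in $C(G,X)$ after multiplying by a scalar $l_2$-weight $(a_n)$ with $a_n\to 0$ slowly; a Kahane–Khintchine / vector-valued Menshov-type estimate shows $\sum a_n r_n\ox x_n\in C(G,X)$ when $(x_n)\in l_2^{weak}(X)$ and $(a_n)\in l_2$, and the resulting $\star\ove f$ factors through $L_2(G)$ because its Fourier multiplier is the diagonal $(a_n)\in l_2\subset c_0$, hence Hilbert–Schmidt-type, certainly bounded $M(G)\to H\to C(G,X)$. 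Property $(\mathfrak l)$ then forces $\sum_n\|T(a_nx_n)\|=\sum_n a_n\|Tx_n\|<\infty$ for every such $(a_n)\in l_2$; letting $(a_n)$ range over all nonnegative $l_2$-sequences and invoking the duality $l_2^*=l_2$ yields $(\|Tx_n\|)\in l_2$. Since $(x_n)\in l_2^{weak}(X)$ was arbitrary, this is precisely the defining inequality for $T\in\Pi_2(X,Y)$.

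The main obstacle is the construction in $1)\implies 2)$: one needs the right compact group and the right vector-valued "Sidon/Menshov" estimate guaranteeing that $\sum a_n r_n\ox x_n$ lands in $C(G,X)$ (not merely in $L_\infty$ or $L_2$) whenever $(x_n)$ is weakly $2$-summable and $(a_n)\in l_2$ — this is where a genuine analytic input (Kahane's inequality, or a direct estimate for lacunary vector-valued series) is required, and it is the step most likely to need care about constants and about which group of characters makes the argument cleanest. The implication $3)\implies 1)$ is, by contrast, essentially a bookkeeping reduction to the scalar Pisier theorem together with the membership $(\wh{\ove f}(\gamma))\in l_1^{weak}(X)\cap l_2(X)$, and I expect it to be routine.
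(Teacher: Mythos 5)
Your architecture for both implications coincides with the paper's: for $1)\implies 2)$ you build $\ove f=\sum_n a_nx_n\gamma_n$ from a weakly $2$-summable sequence $(x_n)$ and an $l_2$-weight $(a_n)$, factor $\star\ove f$ through a Hilbert space via the multiplier $\mu\mapsto(a_n\hat\mu(\gamma_n))$, and recover $(\|Tx_n\|)\in l_2$ by $l_2$--$l_2$ duality; for $3)\implies 1)$ you reduce to showing $(\wh{\ove f}(\gamma))\in l_1^{weak}(X)\cap l_2(X)$, getting the weak-$l_1$ part from the scalar Pisier theorem applied to $x'\circ\star\ove f$. One remark and one genuine gap. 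The remark: the ``main obstacle'' you flag in $1)\implies 2)$ does not exist. No Kahane--Khintchine, Menshov or Sidon-type input is needed, and no change of group: since $|\gamma_k(s)|=1$, Cauchy--Schwarz gives $|\langle\sum_{k=n}^{n+m}a_kx_k\gamma_k(s),x'\rangle|\le\bigl(\sum_{k=n}^{n+m}|a_k|^2\bigr)^{1/2}\bigl(\sup_{\|x'\|\le1}\sum_k|\langle x_k,x'\rangle|^2\bigr)^{1/2}$ uniformly in $s\in G$ and $\|x'\|\le1$, so the partial sums are Cauchy in $C(G,X)$ for any infinite sequence of distinct characters of the given group $G$ --- which is the group you must work on, since property $(\mathfrak l)$ is defined relative to convolutions on $G$, not on $\prod_n\mathbb Z_2$. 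This elementary estimate is exactly what the paper uses.

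The gap is in $3)\implies 1)$: the assertion that ``the orthogonality of the characters in $L_2(G)\subset M(G)$ forces $(A\gamma)$ to lie in $l_2(H)$'' does not stand on its own. Orthogonality (Bessel) only shows that the family $(\gamma\,dt)$ is \emph{weakly} $2$-summable in $M(G)$, and a merely bounded operator $A$ does not upgrade weak $2$-summability to strong $2$-summability. The missing ingredient is Grothendieck's theorem: $M(G)=C(G)^*$ is an abstract $L_1$-space, so every operator $A:M(G)\to H$ into a Hilbert space is absolutely summing, hence $2$-summing; consequently $\star\ove f=BA$ is $2$-summing, $(\star\ove f(\gamma\,dt))=(\gamma(\cdot)\wh{\ove f}(\gamma))$ is strongly $2$-summable in $C(G,X)$, and $\|\gamma(\cdot)\wh{\ove f}(\gamma)\|_{C(G,X)}=\|\wh{\ove f}(\gamma)\|$ gives $(\wh{\ove f}(\gamma))\in l_2(X)$. (One also needs that only countably many $\wh{\ove f}(\gamma)$ are nonzero, which the paper extracts from the weak-$l_1$ estimate after reducing to separable $X$.) With this supplied, your reduction to hypothesis $3)$ is precisely the paper's argument.
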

\begin{proof}
$1) \implies 2).$\,
Fix any infinite sequence $(\gamma_n)$ of
distinct elements of $\Gamma.$
Take a weakly 2-summable  family $(x_n)_1^\infty\in X.$
It is enough (by a theorem of E. Landau) to show that for every sequence $a=(a_k)\in l_2^\infty$
$$\sum_{k=1}^\infty ||Tx_k||\, |a_k|<\infty.$$

Fix $a=(a_k)\in l_2^\infty$ and  
consider the series
\begin{equation}
\sum_{k=1}^\infty a_kx_k\gamma_k.
 \label{s1}
\end{equation}
Since for $n,m\in \mathbb N$
$$
||\sum_{k=n}^{n+m} a_kx_k\gamma_k||_{C(G,X)}=\sup_{s\in G} \sup_{||x'||\le1}|\<\sum_{k=n}^{n+m} a_kx_k\gamma_k(s), x'\>|\le$$
$$\sqrt{\sum_{k=n}^{n+m}|a_k|^2 }\sqrt{ \sup_{||x'||\le1} \sum_{k=1}^{\infty}|\<x_k, x'\>|^2}
$$
and the space $C(G,X)$ is complete, the series (\ref{s1}) converges in $C(G,X).$

Now fix $s\in G$ and take an $x'\in X^*$ with $||x'||\le 1.$
For the operator $u: l_2\to C(G,X),$
defined by $ub:= \sum_{k=1}^\infty b_kx_k\gamma_k$ for $b:=(b_k)\in l_2$ we have
$$
|\sum_{k=1}^\infty b_k \<x_k,x'\>\gamma_k(s)|^2\le ||b||^2 \sum_{k=1}^\infty |\<x_k,x'\>|^2.
$$
Hence,
$$
||\sum_{k=1}^\infty b_k x_k\gamma_k(s)||^2_X\le \sup_{||x'||\le1} ||b||^2 \sum_{k=1}^\infty |\<x_k,x'\>|^2
$$
and $$||ub||^2= \sup_{s\in G} ||\sum_{k=1}^\infty b_k x_k\gamma_k(s)||^2_X \le ||b||^2 \sup_{||x'||\le1} \sum_{k=1}^\infty |\<x_k,x'\>|^2.
$$
Therefore,
  
  \begin{equation}
\begin{aligned}
||u||^2&\le \sup_{||x'||\le1} \sum_{k=1}^\infty |\<x_k,x'\>|^2 \\ 
&\le  \sup_{\overset{F\in C(G,X)^*}{||F||\le1}} \sum_{k=1}^\infty |\<x_k\gamma_k,F\>|^2 =||(x_k\gamma_k)||_2^{weak}=||u||^2.
\end{aligned}
\end{equation}
  
Thus, we have:
\begin{equation}
||(x_k)||_2^{weak}
=||u||  \label{wl2}
\end{equation}
    
Now, for our fixed sequence $a=(a_k)\in l_2$ define an operator $A: M(G)\to l_2$ by
$$
A\mu:=(a_k\widehat{\mu}(\gamma_k)),\, \mu\in M(G).
$$
For $s\in G,$ put $\ove f(s):= \sum_{n=1}^\infty a_nx_n\gamma_n(s).$ Then $\ove f\in C(G,X)$ and
$$
uA\mu= \sum_{k=1}^\infty a_k\widehat{\mu}(\gamma_k) x_k\gamma_k=
\star\ove f(\mu).
$$
By assumption, the family $(T\widehat{\ove f})$ is absolutely summable. It means that
$(a_nTx_n)\in l_1(Y)$ and this is true for any sequence $a\in l_2.$
Therefore, $(Tx_n)\in l_2(Y).$ 

$3) \implies 1).$\,
We may (and do) assume that $X$ is separable (since the subspace $\ove f(G)\subset X$ is separable).

Let $\star {\ove f}=BA$ be a continuous factorization of $\star {\ove f}$ through a Hilbert space:
$$
\star \ove f: M(G)\overset A\to H\overset B\to C(G,X)=C(G)\wt\ot X.
$$
Note that, for $s\in G,$ $\gamma\in \Gamma,$
$$
\star \ove f(\gamma dt)(s)= \int \ove f(t)\gamma(s-t)\, dt=\int \ove f(t)\gamma(s)\ove{\gamma(t)}\, dt=
\gamma(s)\wh{\ove f}(\gamma).
$$
Also, $T\circ \star \ove f$ maps $\gamma$ to $\gamma T(\wh{\ove f}(\gamma))$ (we identify $\gamma dt$ with $\gamma).$
Let
$$
x_\gamma:= \wh{\ove f}(\gamma),\, y_\gamma:= Tx_\gamma\ \, \text{ (so }\ \gamma y_\gamma:=\gamma\ot y_\gamma\in C(G)\wt\ot Y).
$$
If $x'\in X^*$ and $i: C(G)\to C(G)$ is the identity map, 
then $\<x', \ove f(\cdot)\>\in C(G)$ and we have a diagram:
$$
 x'\circ BA=x'\circ \star \ove f: M(G)\overset A\to H\overset B\to C(G,X)=C(G)\wt\ot X\overset{i\ot x'}\longrightarrow C(G)
$$
with $ x'\circ BA(\gamma)= \star \ove f(\gamma)= \<x', x_{\gamma}\>\gamma.$
By the Pisier's result, for every $x'\in X$ the series $\sum_\gamma |\<x', x_{\gamma}\>|$ is convergent.
Hence, only countably many of $x_\gamma$'s are not zero (recall that $X$ is assumed to be separable).
Since $A\in \Pi_1(M(G), H),$ the mapping $\star \wh{\ove f}=BA$ is absolutely 2-summing. So,  the family
$(x_{\gamma})$ is stronly 2-summable (and countable).
Thus, $(x_{\gamma})\in l_1^{weak}(X)\cap l_2(X).$ By 3), $(y_\gamma)=T(x_{\gamma})\in l_1(Y).$
\end{proof}

It follows immediately:

\begin{corollary}\label{CorPi}
 If $T\in \Pi_1(X,Y),$ then $T\in L(X,Y)$ possesses the property $(\mathfrak l).$
 \end{corollary}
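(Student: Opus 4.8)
The plan is to deduce the statement from Theorem~\ref{ThS2}, which has already done all the work. Recall that property $(\mathfrak l)$ is assertion $1)$ there, and that the theorem establishes the implication $3)\implies 1)$, where assertion $3)$ reads: if $d\in l_1^{weak}(X)\cap l_2(X)$, then $Td\in l_1(Y)$. So it suffices to verify that every $T\in\Pi_1(X,Y)$ satisfies assertion $3)$, and then quote the theorem.

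To carry this out, suppose $T\in\Pi_1(X,Y)$ and let $d=(x_n)\in l_1^{weak}(X)\cap l_2(X)$. By the defining property of an absolutely ($1$-)summing operator, $T$ sends every weakly $1$-summable family in $X$ to an absolutely summable family in $Y$; applied to $d$ this gives $(Tx_n)\in l_1(Y)$. (In fact the hypothesis $d\in l_2(X)$ is not used at all — membership in $l_1^{weak}(X)$ already suffices, which is why $\Pi_1$ is more than enough.) Hence assertion $3)$ of Theorem~\ref{ThS2} holds for $T$, and by the implication $3)\implies 1)$ of that theorem, $T$ possesses the property $(\mathfrak l)$.

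There is no genuine obstacle here: the whole content sits inside Theorem~\ref{ThS2}, and the corollary is just the observation that the hypothesis $T\in\Pi_1(X,Y)$ is a convenient, directly checkable sufficient condition for assertion $3)$. For consistency one may also remark that, since $\Pi_1(X,Y)\subset\Pi_2(X,Y)$, the conclusion agrees with the implication $1)\implies 2)$ of Theorem~\ref{ThS2}.
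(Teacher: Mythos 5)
Your proposal is correct and matches the paper's intended argument: the paper states the corollary with only the remark ``It follows immediately,'' the immediate derivation being exactly yours, namely that $T\in\Pi_1(X,Y)$ satisfies assertion $3)$ of Theorem~\ref{ThS2} (via the defining inequality $\sum_n\|Tx_n\|\le \pi_1(T)\sup_{\|x'\|\le1}\sum_n|\langle x_n,x'\rangle|$, which does not even need the $l_2(X)$ hypothesis), so the implication $3)\implies1)$ gives property $(\mathfrak l)$.
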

 
 Consider a generalization of Definirion \ref{D1}.
 
 \begin{definition}\label{D2}
 Let $0< s\le1$ and  $1/r=1/s-1.$
$T\in L(X,Y)$ possesses the property $(\mathfrak l_r)$ if
for every $\ove f\in C(G,X)$ such that $\star \ove f$ can be factored through an $S_r$-operator in a Hilbert space 
$$
\star \ove f: M(G)\overset A\to H\overset {U}\to H\overset B\to C(G,X)
$$
the family $T\widehat{\ove f}$ is absolutely s-summable.
\end{definition}
 
 The proof of the following result is the same as the proof of the implication $3)\implies 1)$
 of the previous theorem  (but instead of the Pisier's result one must use Theorem \ref{ThP1} for 
 the general case). Cf. Corollary \ref{CorPi}.
 
\begin{theorem}\label{ThS3}
For functions $\ove f\in C(G,X),$
consider the convolution operators
$\star \ove f: M(G)\to C(G,X)$ and let 
$T\in L(X,Y).$ Consider the following assertions:

1)\,
$T\in L(X,Y)$ possesses the property $(\mathfrak l_r).$

2)\,
$T\in \Pi_r(X,Y).$


We have:
$2) \implies 1).$
\end{theorem}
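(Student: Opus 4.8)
The plan is to mimic, essentially verbatim, the proof of the implication $3)\implies 1)$ in Theorem~\ref{ThS2}, with the only change being the replacement of the Pisier result by its generalization Theorem~\ref{ThP1} and a bookkeeping of the relevant exponents. So suppose $T\in \Pi_r(X,Y)$ and let $\ove f\in C(G,X)$ be such that $\star \ove f$ factors as $M(G)\overset{A}\to H\overset{U}\to H\overset{B}\to C(G,X)$ with $U\in S_r(H)$. As before we may assume $X$ is separable, since $\ove f(G)$ is separable. Set $x_\gamma:=\wh{\ove f}(\gamma)$ and $y_\gamma:=Tx_\gamma$, and recall that $\star\ove f$ sends $\gamma\,dt$ to $\gamma(\cdot)x_\gamma$, so that $x'\circ\star\ove f$ sends $\gamma\,dt$ to $\<x',x_\gamma\>\gamma$ for every $x'\in X^*$.

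The next step is to show that $(x_\gamma)$ lies in a space small enough that $\Pi_r$ applied to it lands in $l_s(Y)$. Composing the factorization with $i\ot x':C(G)\wt\ot X\to C(G)$ we obtain a factorization of the scalar convolution operator $\star(\<x',\ove f(\cdot)\>)=x'\circ\star\ove f$ through the $S_r$-operator $U$; by Theorem~\ref{ThP1} this gives $\{\<x',x_\gamma\>\}=\wh{\<x',\ove f\>}\in l_{s}$ for every $x'\in X^*$. In particular only countably many $x_\gamma$ are nonzero (by separability of $X$), and the family $(x_\gamma)$ is weakly $s$-summable, i.e. $(x_\gamma)\in l_s^{weak}(X)$. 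Since $T\in\Pi_r(X,Y)$ and $r\ge s$ (because $1/s=1/r+1\ge 1/r$, and $\Pi_r\subset\Pi_{r'}$ for $r\le r'$ is not what we want — rather we use that a weakly $s$-summable family is weakly $r$-summable as $l_s\subset l_r$, so $(x_\gamma)\in l_r^{weak}(X)$), the definition of $r$-absolutely summing operator gives $(y_\gamma)=(Tx_\gamma)\in l_r(Y)$. Hmm — this only yields $l_r$, not $l_s$. To get the sharper $l_s$ one should instead keep the extra structure: exactly as in Theorem~\ref{ThS2}, since $A\in\Pi_1(M(G),H)$ the composition $\star\ove f=BA$ is itself $2$-summing, so $(x_\gamma)$ is in fact strongly $2$-summable; combined with $(x_\gamma)\in l_s^{weak}(X)$ and an interpolation/Hölder argument at the level of the defining inequality for $\Pi_r$ (splitting $\sigma_r(U)$ through $S_r\circ S_1\subset S_s$ as in the proof of Theorem~\ref{ThP1}), one pushes the image family into $l_s(Y)$.

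In fact the cleanest route is to re-run the diagram argument of the commented-out portion: factor $T=U_0 j k$ through $X\hookrightarrow C(K)\to L_2(K,\nu)\to Y$ using that $\Pi_r$-operators factor through $S_r$-style diagrams (Proposition~\ref{Pr1} and Remark~\ref{Rem1} supply the analogous factorization $T=A_0\Delta_0 B_0$ with $B_0\in\Pi_2$, $\Delta_0\in S_r$... up to adjusting indices), then compose with the Hilbert-space factorization of $\star\ove f$ and with the natural injection $l:L_2(G)\hookrightarrow M(G)$, observing that $l(\gamma)=\gamma\,dt$. The resulting operator $L_2(G)\to$ (target) factors as a product in which one factor is Hilbert–Schmidt (coming from $\Pi_2$-pieces) and the other lies in $S_r$; by the multiplicativity $S_r\circ S_2\subset S_s$ with $1/s=1/r+1/2$... one must double-check this matches $1/s=1/r+1$, which it does not, so one uses $S_r\circ S_1\subset S_s$ with the $j:C(K)\hookrightarrow L_1$ inclusion being $1$-summing instead of $2$-summing. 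Then the image of the family $(\gamma)_{\gamma\in\Gamma}$ under this $S_s$-operator is the family $(y_\gamma)$, which is therefore absolutely $s$-summable, i.e. $T\wh{\ove f}\in l_s(Y)$, establishing $(\mathfrak l_r)$.

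The main obstacle is precisely the exponent bookkeeping flagged above: naively applying the definition of $\Pi_r$ to a weakly $s$-summable family only gives an $l_r(Y)$-image, which is weaker than the required $l_s(Y)$. The fix is to exploit that in the convolution setting $A$ is automatically $1$-summing (not merely bounded), so the "extra" Hilbert–Schmidt / $1$-summing factor is genuinely there, and then the composition rule $S_r\circ S_1\subset S_s$ with $1/s=1/r+1$ — which is exactly the rule already used in part~1) of the proof of Theorem~\ref{ThP1} — delivers the sharp summability. Once this is set up the verification is routine and parallels Theorem~\ref{ThS2} line by line.
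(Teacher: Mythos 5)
You have correctly reconstructed the skeleton that the paper itself only gestures at (its entire proof of Theorem \ref{ThS3} is the remark that one repeats the argument for $3)\implies 1)$ of Theorem \ref{ThS2} with Theorem \ref{ThP1} in place of Pisier's theorem), and you have honestly put your finger on the real difficulty: composing the factorization with $i\otimes x'$ and applying Theorem \ref{ThP1} gives $\widehat{\langle x',\ove f\rangle}\in l_s$ for each $x'$, hence $(x_\gamma)\in l_s^{weak}(X)$, but feeding a weakly $s$-summable family into the defining inequality of $\Pi_r$ only returns $(Tx_\gamma)\in l_r(Y)$. Getting from $l_s^{weak}(X)$ to $l_s(Y)$ is by definition $s$-summability, and $\Pi_r\not\subset\Pi_s$ for $s<r$. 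Unfortunately neither of your two proposed repairs closes this gap. The ``interpolation/H\"older argument'' is never carried out, and it cannot be a routine H\"older estimate: the extra information the Theorem \ref{ThS2} argument supplies is only that $(x_\gamma)$ is strongly $2$-summable, and $s<\min(r,2)$, so $l_s$ does not lie between $l_r$ and $l_2$ and the interpolation inequality for $l_p$-norms runs the wrong way.

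The second, ``cleanest'' route is actually unsound. Even after repairing the exponent bookkeeping (the composite $L_2(G)\to L_2(\nu)$ lies in $S_2\circ S_r\circ S_2\subset S_s$ with $1/s=1/2+1/r+1/2=1/r+1$, so the indices do match), the concluding step --- ``the image of the orthonormal family $(\gamma)$ under this $S_s$-operator is absolutely $s$-summable'' --- is false for every $s<2$, hence for every $s$ occurring here: a rank-one operator $V=\langle\cdot,f\rangle g$ belongs to every $S_s$, yet $\sum_n\|Ve_n\|^s=\|g\|^s\sum_n|\langle e_n,f\rangle|^s$ diverges whenever the coefficient sequence of $f$ lies in $l_2\setminus l_s$. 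This is precisely the fallacy in the discarded draft appended (commented out) to the source, which the author labels ``not good''; it is also why the scalar Theorem \ref{ThP1} argues through \emph{eigenvalues} of the composite (legitimate by Weyl's inequality, because $\star f\circ j$ is diagonal in the characters) --- a device unavailable for $T\circ\star\ove f\circ l$, which is not diagonal. What is still missing, therefore, is a genuine argument that $T\in\Pi_r$ together with the specific structure of $(x_\gamma)$ --- namely $(x_\gamma)\in l_s^{weak}(X)\cap l_2(X)$, or better $x_\gamma=W(Ve_\gamma)$ with $V\in S_t$, $1/t=1/r+1/2$, and $W$ bounded --- forces $(Tx_\gamma)\in l_s(Y)$. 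That statement is the exact analogue of hypothesis 3) of Theorem \ref{ThS2}, and neither your proposal nor the paper's one-line proof establishes it.
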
 
 
 Now, a concrete corollary from the theorems.
 
 \begin{corollary}
 For $1\le p \le2$ it follows from $T\in \Pi_2(L_p(\nu), Y)$ that
 $T$ possesses the properties $(\mathfrak l_r)$ for all $r\in (0,1].$
 On the other hand, $T\in (\mathfrak l)$ implies $T\in \Pi_1(L_p(\nu), Y).$ 
  \end{corollary}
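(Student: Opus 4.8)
The plan is to obtain both implications as direct consequences of Theorems \ref{ThS2} and \ref{ThS3}, fed by one classical external fact: for $1\le p\le 2$, any measure $\nu$ and any Banach space $Y$, the absolutely summing classes $\Pi_r(L_p(\nu),Y)$ all coincide for $0<r\le 2$; equivalently, every $2$-summing operator out of $L_p(\nu)$ is $r$-summing for every such $r$. For $p=1$ this is Grothendieck's theorem, and in general it rests on the cotype-$2$ property of $L_p(\nu)$ for $p\le 2$ (Maurey); I would simply quote it from \cite{DiASO} rather than reprove it. This is the only non-formal ingredient, and it carries essentially all the content of the corollary; everything else is bookkeeping with results already established.

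For the first assertion I would take $T\in\Pi_2(L_p(\nu),Y)$ with $1\le p\le 2$ and fix $r\in(0,1]$. The collapse of the summing norms gives $T\in\Pi_r(L_p(\nu),Y)$, and then Theorem \ref{ThS3} (the implication $2)\implies 1)$), applied with $X=L_p(\nu)$ and with $s$ determined by $1/s=1+1/r$ as in Definition \ref{D2}, yields that $T$ possesses $(\mathfrak l_r)$. Since $r\in(0,1]$ is arbitrary, $T$ has $(\mathfrak l_r)$ for all $r\in(0,1]$. The same argument in fact delivers $(\mathfrak l_r)$ for every $r\in(0,\infty)$: for $r\le 2$ through the collapse, and for $r>2$ through the trivial inclusion $\Pi_2\subseteq\Pi_r$; but only the range $(0,1]$ is claimed.

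For the second assertion I would assume $T$ has the property $(\mathfrak l)$. Theorem \ref{ThS2} (the implication $1)\implies 2)$), applied with $X=L_p(\nu)$, gives $T\in\Pi_2(L_p(\nu),Y)$. Using the collapse once more, now at $r=1$, we get $T\in\Pi_1(L_p(\nu),Y)$, which is what is asserted. One can add that, conversely, $\Pi_1(L_p(\nu),Y)$ is contained in the class of operators with property $(\mathfrak l)$ by Corollary \ref{CorPi}; hence for operators out of $L_p(\nu)$ with $1\le p\le 2$ the property $(\mathfrak l)$ and $1$-summability are equivalent, so the implication is optimal.

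The proof is therefore just a matter of assembling already-proved statements, and the only real obstacle is locating and citing the correct form of the summing-norm collapse on $L_p(\nu)$ for $1\le p\le 2$; a secondary, purely notational point is keeping straight the correspondence $1/s=1+1/r$ between the index $r$ of $(\mathfrak l_r)$ and the summability exponent $s$ occurring in its definition.
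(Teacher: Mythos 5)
Your proposal is correct and follows essentially the same route as the paper: the paper's entire proof is the remark that $\Pi_s(L_p(\nu),Y)=\Pi_2(L_p(\nu),Y)$ for all $s\in(0,2]$ when $1\le p\le 2$, with the appeals to Theorem \ref{ThS3} (for $(\mathfrak l_r)$) and Theorem \ref{ThS2} (for the converse direction) left implicit. You have simply made those appeals explicit, which is fine.
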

  
  Indeed, as is well known, for any $s\in (0,2]$ we have $\Pi_s(L_p(\nu),Y)=\Pi_2(L_p(\nu), Y)$
(see  \cite{PiOP}).  
  

\section{On a Pisier's result: Vector-valued case}\label{3}

In this section we  are going to give some generalizations of the Pisier's theorem mentioned
in Introduction to the cases of $S_{p}$-factorizations of operators for
vector-valued cases.
We will generalize also a result of P. Saab \cite{Sa8}, Theorem 4.2, where
it was shown that the Pisier's techniques in the scalar case can be
extended to the vector-valued case (factorizations of a vector valued convolutions through Hilbert spaces).
In the end of the section, we consider the factorizations through $S_{p,q}$-operators linear mappings between
tensor products of several Banach spaces.

 Let $f\in C(G)$ and $T\in L(X,Y).$ All operators under considerations are
 supposed to be not identically zero.

 Denote by $M(G,X)$ the Banach space of all regular Borel $X$-valued measures
 of bounded variation, C(G,X) the Banach space of all continuous $X$-valued functions
defined on $G$ equipped with the supremum norm.

Note that $$M(G)\otimes X\subset M(G)\widehat\otimes X\subset M(G,X),$$
 where $\widehat\otimes$ is the projective tensor product.

 Define a map $T_f:=T\circ \star f: M(G,X)\to C(G,Y)$ by
 $$
 T_f(\overline \mu)(s)= \int_G f(s -t)\, dT\overline\mu(t),\ \overline \mu\in M(G,X).
 $$

 \begin{theorem}\label{Th1}
Let $f\in C(G),$
$0<r,s<\infty.$
Consider a convolution operator
$\star f: M(G)\to C(G)$ and an operator $T: X\to Y.$
If the operator
$$T_f: M(G,X)\to C(G,Y)$$
can be factored through an $S_{r,s}$-operator then
the operators $\star f$ and $T$ possess the same property.
The same is true for the case where $r=s=\infty$
(or $q_1=q_2=1).$
\end{theorem}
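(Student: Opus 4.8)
The plan is to deduce factorization properties of $\star f$ and of $T$ from a factorization of $T_f = T\circ\star f$ through an $S_{r,s}$-operator by ``restricting'' $T_f$ to suitable complemented slices of $M(G,X)$ and $C(G,Y)$. First I would isolate $\star f$: fix any unit vector $x_0\in X$ and any functional $y'_0\in Y^*$ with $\langle Tx_0, y'_0\rangle = 1$ (this uses $T\neq 0$; if no such pair exists because $Tx_0=0$ for the chosen $x_0$, pick $x_0$ with $Tx_0\neq 0$ first and rescale). Then consider the maps $\iota_{x_0}\colon M(G)\to M(G,X)$, $\mu\mapsto \mu\otimes x_0$, and $\pi_{y'_0}\colon C(G,Y)\to C(G)$, $\ove g\mapsto \langle \ove g(\cdot), y'_0\rangle$; both are bounded with norm $\le 1$. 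A direct computation with the defining formula for $T_f$ shows $\pi_{y'_0}\circ T_f\circ \iota_{x_0} = \langle Tx_0, y'_0\rangle\,\star f = \star f$. Since the ideal $S_{r,s}$ (and the associated operator ideal of maps factoring through an $S_{r,s}$-operator in a Hilbert space) is stable under composition with bounded operators on either side, the assumed factorization of $T_f$ yields a factorization of $\star f$ through an $S_{r,s}$-operator, with the corresponding quasi-norm estimate. The case $r=s=\infty$ (factorization through a bounded, resp.\ compact, Hilbert-space operator) is handled by the same argument, since that class is likewise a two-sided ideal.

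For the operator $T$ itself I would argue analogously, but now slicing in the ``group variable'' instead of the ``vector variable.'' Fix a character $\gamma_0\in\Gamma$ with $\hat f(\gamma_0)\neq 0$ — such a $\gamma_0$ exists because $\star f\neq 0$ forces $\hat f\not\equiv 0$. Define $j_{\gamma_0}\colon X\to M(G,X)$ by $x\mapsto (\gamma_0\,dm)\otimes x$ (a measure of variation $\|x\|$), and define an evaluation/Fourier-projection $p_{\gamma_0}\colon C(G,Y)\to Y$ by $p_{\gamma_0}(\ove g) := \widehat{\ove g}(\gamma_0) = \int_G \overline{\gamma_0(t)}\,\ove g(t)\,dm(t)$, again bounded by $1$. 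Using $\star f(\gamma_0\,dm)(s) = \gamma_0(s)\hat f(\gamma_0)$ (the identity already recorded in the excerpt just before Definition \ref{D1}) one computes $p_{\gamma_0}\circ T_f\circ j_{\gamma_0} = \hat f(\gamma_0)\,T$. Dividing by the nonzero scalar $\hat f(\gamma_0)$ and invoking the ideal property once more gives a factorization of $T$ through an $S_{r,s}$-operator, again with the quasi-norm bound, and similarly in the $r=s=\infty$ case.

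The routine points are the two ``sandwich'' identities $\pi_{y'_0}T_f\iota_{x_0}=\langle Tx_0,y'_0\rangle\star f$ and $p_{\gamma_0}T_f j_{\gamma_0}=\hat f(\gamma_0)T$, which are just unwinding the definition $T_f(\ove\mu)(s)=\int_G f(s-t)\,dT\ove\mu(t)$ together with Fubini and the fact that $T$ commutes with integration against scalar measures; and the verification that the auxiliary maps $\iota_{x_0},\pi_{y'_0},j_{\gamma_0},p_{\gamma_0}$ are bounded, which is immediate. The one genuine point to get right is the \emph{ideal stability}: I must use that if $V=BUA$ with $U\in S_{r,s}(H)$, and $R,S$ are bounded operators, then $SVR = (SB)U(AR)$ still factors through the same $S_{r,s}$-operator $U$ — this is trivial for $\gamma_{S_{r,s}}$ defined as an infimum over such factorizations, and equally trivial for the $r=s=\infty$ (bounded/compact) case, so there is in fact no serious obstacle; the only care needed is to phrase ``possesses the same property'' uniformly so that the single statement covers all the ranges of $(r,s)$ at once, and to note that when $r=s=\infty$ one may read ``$S_\infty$-operator'' either as ``bounded operator'' or as ``compact operator'' — the argument is insensitive to the choice.
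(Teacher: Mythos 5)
Your proposal is correct and follows essentially the same route as the paper: both recover $\star f$ and $T$ by sandwiching $T_f$ between bounded ``slice'' maps (composing with $\mu\mapsto\mu\otimes x_0$ and a functional $y_0'\in Y^*$ to isolate $\star f$, and with an embedding of $X$ into $M(G,X)$ by a fixed scalar measure followed by an evaluation on $C(G,Y)$ to isolate $T$), and then invoke the two-sided ideal property of the class of operators factoring through an $S_{r,s}$-operator. The only cosmetic differences are that for $T$ the paper uses $\delta_e\otimes x$ and point evaluation at an $s_0$ with $f(s_0)\neq0$ where you use $(\gamma_0\,dm)\otimes x$ and the Fourier coefficient at a $\gamma_0$ with $\hat f(\gamma_0)\neq0$, and for $\star f$ the paper routes through the closed image and an orthogonal projection where you simply apply the globally defined functional slice --- both variants are equally valid.
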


\begin{proof}
We  may (and do) assume that $T\neq0$ and $f\neq0.$

We use partially (in the first part of the proof) an idea from \cite{Sa8}.
Let $T_f=BUA,$ where $A\in L(M(G,X),H),$ $U\in S_{r,s}(H)$ and $B\in L(H, C(G,Y).$
Fix a point $s_0\in G$ for which $f(s_0)\neq0.$
Define the operators $i: X\to M(G,X)$ and $j: C(G,Y)\to Y$ by
$ix=\delta_e\otimes x$ ($e$ is a neutral element of $G$) and
$jh=h(s_0)/f(s_0).$ For $s\in G$

\begin{equation}
\begin{aligned}
(T_fix)(s)&=\int f(s-t)\, dT(\delta_e\otimes x)(t)\\
&= \int_G f(s-t) Tx\, d\delta_e(t)=f(s)Tx\in C(G,Y).
\end{aligned}
\end{equation}

Hence, $jT_fix= f(s_0)Tx/f(s_0)=Tx$ or $T=jBUAi.$

Now, let $k: M(G)\to M(G,X)$ be defined by $k\mu=\mu\otimes x_0,$ where $x_0$ is such that
$||Tx_0||=1.$
Then $BUAk\mu(M(G))=:C_1\subset C(G,Y)$ and $UAk(M(G))=H_1\subset H.$
Denote by $P$ an orthogonal projector from $H$ onto$H_1$ and by $R$ the composition $Bl,$
where $l: H_1\to H$ is the identity injection. We have a diagram:
$$
M(G)\overset k\to M(G,X)\overset A\to H\overset U\to H\overset P\to H_1\overset R\to C_1\subset C(G,Y).
$$
If $\mu\in M(G),$ then
$$RPUAk\mu=T_fk\mu= T_f(\mu\otimes x_0)= Tx_0 \int_G f(\cdot-t)\, d\mu(t)=Tx_0\, f*\mu.
$$
Therefore, $C_1= \{Tx_0\, f*\mu:\, \mu\in M(G)\}\subset C(G)\otimes  \operatorname{span}\{Tx_0\}\subset C(G,Y).$
Take a functional $y'\in Y^*$ with $\langle y',Tx_o\rangle =||y'||=1$ and define an operator
$V: C(G)\otimes  \operatorname{span}\{Tx_0\}\to C(G)$ putting
$V(h\otimes y)= h\langle y', y\rangle $ for $y\in \operatorname{span}\{Tx_0\}.$
Then, for $\mu\in M(G),$
$$
VRPUAk\mu= V(f*\mu\otimes Tx_0)=f*\mu.
$$
Thus, the convolution operator $\star f$ is factorized through an operator from $S_{r,s}.$
\end{proof}

\begin{remark}\label{Rem1}
It is clear from the proof that the condition
"the operator $T_f: M(G,X)\to C(G,Y)$" can be changed by the condition
"the restricted operator $T_f: M(G)\widehat\otimes X)\to C(G,Y).$"
\end{remark}

\begin{corollary}
 Let $f\in C(G),$
$0<p\le\infty.$
Consider a convolution operator
$\star f: M(G)\to C(G)$ and an operator $T: X\to Y.$
If the operator
$$T_f: M(G,X)\to C(G,Y)$$
can be factored through an $S_{p}$-operator then
the operators $\star f$ and $T$ possess the same property.
\end{corollary}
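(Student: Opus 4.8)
The plan is to obtain this as the diagonal special case $r=s=p$ of Theorem~\ref{Th1}. The key point is the identification recorded in the Preliminaries: for $0<p<\infty$ the Lorentz--Schatten class $S_{p,p}(H)$ coincides with the Schatten class $S_p(H)$ (with $\sigma_{p,p}=\sigma_p$). Consequently, saying that an operator between Banach spaces can be factored through an $S_p$-operator in a Hilbert space is literally the same as saying that it can be factored through an $S_{p,p}$-operator.

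So, fix $0<p<\infty$ and suppose $T_f: M(G,X)\to C(G,Y)$ factors through an $S_p$-operator, that is, through an $S_{p,p}$-operator. Applying Theorem~\ref{Th1} with $r=s=p$ yields that the operators $\star f: M(G)\to C(G)$ and $T: X\to Y$ both factor through $S_{p,p}$-operators, hence through $S_p$-operators; this is precisely the asserted conclusion for finite $p$. For the endpoint $p=\infty$ one appeals instead to the last sentence of Theorem~\ref{Th1}, which explicitly handles the case $r=s=\infty$, namely factorization through a compact operator in a Hilbert space (an $S_\infty$-operator).

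Since the corollary is a direct specialization of a theorem that has already been proved, there is essentially no obstacle to overcome. The only point demanding care is bookkeeping: one should invoke the identification $S_{p,p}=S_p$ from the Preliminaries, together with the final clause of Theorem~\ref{Th1} for $p=\infty$, rather than re-running the factorization argument of Theorem~\ref{Th1} from scratch.
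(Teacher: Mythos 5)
Your proposal is correct and is exactly the argument the paper intends: the corollary is stated without proof precisely because it is the diagonal specialization $r=s=p$ of Theorem~\ref{Th1} via the identification $S_{p,p}=S_p$, with the theorem's final clause covering $p=\infty$. No gap.
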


 In a partial case where $X=Y,$ $T=\operatorname{id}_X$ and $p:=p_1=p_2=\infty$ we get
 a result of E. Saab \cite{Sa8}:

 \begin{corollary}
 Let $f\in C(G),$
Consider a Banach space $X$ and a convolution operator
$\star f: M(G)\to C(G).$
If the operator
$$T_f: M(G,X)\to C(G,X)$$
can be factored through a Hilbert space then
  $\hat f\in l_1$ and $X\cong H.$
\end{corollary}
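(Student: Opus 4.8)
The plan is to obtain this corollary as the special case $X=Y$, $T=\id_X$, $r=s=\infty$ of Theorem~\ref{Th1} (equivalently, of the preceding $S_p$-corollary with $p=\infty$), combined with two standard facts. Recall that in the setup fixed in the Introduction, factoring a bounded operator ``through a Hilbert space'' means the same as factoring it ``through an $S_\infty$-operator'' (a bounded operator in a Hilbert space). So the hypothesis that $T_f:M(G,X)\to C(G,X)$ factors through a Hilbert space is precisely the hypothesis of Theorem~\ref{Th1} in the case $r=s=\infty$, and the theorem yields that both the scalar convolution operator $\star f:M(G)\to C(G)$ and the operator $T=\id_X:X\to X$ factor through $S_\infty$-operators, i.e. through Hilbert spaces.

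From here I would read off the two conclusions separately. First, the fact that $\star f$ factors through a Hilbert space is exactly Theorem~\ref{ThP1} in the boundary case $s=1$ (then $1/r=1/s-1=0$, so $r=\infty$ and an $S_r$-operator is just a bounded operator); its stated equivalence gives $\hat f\in l_s=l_1$. This step is also just Pisier's theorem quoted in the Introduction. Second, suppose $\id_X=BA$ with $A\in L(X,H)$, $B\in L(H,X)$. Since $BA=\id_X$, the operator $A$ is bounded below, so $A(X)$ is closed in $H$; moreover $P:=AB$ satisfies $P^2=A(BA)B=AB=P$ and $P(H)=A(X)$, so $A(X)$ is a complemented subspace of the Hilbert space $H$, hence isomorphic to a Hilbert space, and $A:X\to A(X)$ is an isomorphism. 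Thus $X$ is linearly isomorphic to a Hilbert space, which is what ``$X\cong H$'' means here.

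A word on degenerate cases: the blanket convention of this section that all operators under consideration are not identically zero gives $f\neq0$ and $X\neq\{0\}$, which is all that is needed to apply Theorem~\ref{Th1} (whose proof chooses a point $s_0$ with $f(s_0)\neq0$ and a vector $x_0$ with $||Tx_0||=1$).

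I do not expect any real obstacle. The only point requiring attention is to make sure the ``$S_\infty$'' reading of Theorem~\ref{Th1} is applied correctly --- that ``factors through a Hilbert space'' and ``factors through an $S_\infty$-operator'' are interchangeable --- but this is the convention fixed at the outset, so there is nothing further to prove.
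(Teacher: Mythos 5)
Your proposal is correct and follows exactly the route the paper intends: the paper derives this corollary by specializing Theorem~\ref{Th1} to $X=Y$, $T=\id_X$, $p=\infty$, leaving the two remaining steps (Pisier's theorem for $\hat f\in l_1$, and the complemented-subspace argument showing that $\id_X$ factoring through a Hilbert space forces $X\cong H$) unstated; you have supplied both correctly. Nothing is missing.
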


We proof now a general theorem:

 \begin{theorem}\label{Th2}
Let $0<s\le r<\infty,$ $T_i\in L(X_i,Y_{i})$ for $i=1,2,\dots, m.$
If the operators $T_i$
can be factored through the $S_{r,s}$-operators then
the tensor product
$$T:= T_1\otimes T_2\otimes \dots \otimes T_m: X_1\widehat\otimes X_2\widehat\otimes\dots\widehat\otimes X_m\to Y_1\widetilde\otimes Y_2\widetilde\otimes\dots\widetilde\otimes Y_m$$
  possesses the same property.
\end{theorem}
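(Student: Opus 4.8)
The plan is to factor each $T_i$ as $T_i = B_i U_i A_i$ with $A_i \in L(X_i,H_i)$, $U_i \in S_{r,s}(H_i)$, $B_i \in L(H_i,Y_i)$, and then to take tensor products of these three layers. First I would form the operators
$$
A := A_1 \otimes \dots \otimes A_m : X_1 \widehat\otimes \dots \widehat\otimes X_m \to H_1 \widehat\otimes \dots \widehat\otimes H_m,
$$
$$
U := U_1 \otimes \dots \otimes U_m, \qquad
B := B_1 \otimes \dots \otimes B_m : H_1 \widetilde\otimes \dots \widetilde\otimes H_m \to Y_1 \widetilde\otimes \dots \widetilde\otimes Y_m,
$$
so that on elementary tensors $T = BUA$. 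The issue is that $A$ is bounded on the projective tensor product (projective norm is a crossnorm, so $\|A\| \le \prod \|A_i\|$), and $B$ is bounded on the injective tensor product (injective norm is a crossnorm), but $U$ sits in between: it must be viewed as a map from the projective Hilbert-space tensor product into the injective one, passing through the Hilbertian tensor product $H := H_1 \otimes_2 \dots \otimes_2 H_m$. So the real diagram is
$$
X_1 \widehat\otimes \dots \widehat\otimes X_m \overset{A}\longrightarrow H_1 \widehat\otimes \dots \widehat\otimes H_m \to H \overset{U}\longrightarrow H \to H_1 \widetilde\otimes \dots \widetilde\otimes H_m \overset{B}\longrightarrow Y_1 \widetilde\otimes \dots \widetilde\otimes Y_m,
$$
where the unlabelled arrows are the canonical norm-one maps (projective $\to$ Hilbertian because $\|\cdot\|_{H_1 \otimes_2 \cdots} \le \|\cdot\|_\wedge$ on each finite-dimensional piece, and Hilbertian $\to$ injective because the injective norm is the least crossnorm). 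Thus everything reduces to showing that $U = U_1 \otimes \dots \otimes U_m$ extends to a genuine operator on $H$ lying in $S_{r,s}(H)$.

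The key computation is the behaviour of singular numbers under Hilbertian tensor products: if $U_i$ has singular numbers $(s_n^{(i)})_n$, then $U_1 \otimes_2 \dots \otimes_2 U_m$ has singular numbers the decreasing rearrangement of the family of products $\{ s_{n_1}^{(1)} \cdots s_{n_m}^{(m)} \}$. I would then invoke the multiplicativity relations for the Lorentz sequence spaces recorded in the preliminaries — namely $l_{p_1,q_1} \cdot l_{p_2,q_2} \hookrightarrow l_{s',r'}$ with $1/s' = 1/p_1 + 1/p_2$, $1/r' = 1/q_1 + 1/q_2$ (the sequence analogue of the stated composition rule $S_{p,q} \circ S_{p',q'} \subset S_{s,r}$) — iterated $m$ times. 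Since here all factors lie in $l_{r,s}$, an $m$-fold application gives that the product family lies in $l_{r/m,\,s/m}$; combined with the hypothesis $0 < s \le r$ (hence $s/m \le r/m$, so $S_{r/m,s/m}$ is a legitimate Lorentz–Schatten class and the norm estimates hold), one concludes $U \in S_{r/m,s/m}(H) \subset S_{r,s}(H)$ after applying the inclusion $S_{p',q'} \subset S_{p,q}$ for $p' \le p$ (stated in the preliminaries, noting $r/m \le r$). Actually it suffices to know $U$ lands in \emph{some} $S_{p,q}$-class with $p < \infty$; but the clean statement is that $U$ factors through an $S_{r,s}$-operator, so I would carry the inclusion through explicitly.

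The main obstacle is the bookkeeping at the two ``seams'' of the diagram: verifying that the canonical map $H_1 \widehat\otimes \dots \widehat\otimes H_m \to H_1 \otimes_2 \dots \otimes_2 H_m$ and the canonical map $H_1 \otimes_2 \dots \otimes_2 H_m \to H_1 \widetilde\otimes \dots \widetilde\otimes H_m$ are both norm-one and that $T = BUA$ really holds on all of the projective tensor product (not merely on elementary tensors), which follows by density of $X_1 \otimes \dots \otimes X_m$ together with continuity of all maps involved. A minor point is that a priori $U_1 \otimes \dots \otimes U_m$ is only defined algebraically; one must check it is bounded $H \to H$ before speaking of its singular numbers, which is immediate from the tensor-product formula for singular values (or from $\|U_1 \otimes_2 \dots \otimes_2 U_m\| = \prod \|U_i\|$). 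Once these identifications are in place, the singular-number computation and the Lorentz-space multiplicativity finish the proof.
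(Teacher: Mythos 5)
Your overall architecture --- factor each $T_i$ through an $S_{r,s}$-operator, tensor the three layers, and splice in the canonical norm-one maps $H_1\widehat\otimes\cdots\widehat\otimes H_m \to H_1\otimes_2\cdots\otimes_2 H_m \to H_1\widetilde\otimes\cdots\widetilde\otimes H_m$ --- is exactly the paper's, and your identification of the singular numbers of $U_1\otimes_2\cdots\otimes_2 U_m$ with the decreasing rearrangement of the product family $\{s_{n_1}^{(1)}\cdots s_{n_m}^{(m)}\}$ is also the step the paper takes. But the lemma you invoke to finish is the wrong one, and that step fails. The relation $1/s'=1/p_1+1/p_2,$ $1/r'=1/q_1+1/q_2$ governs \emph{compositions} $UV$, where one estimates $s_{n+k-1}(UV)\le s_n(U)\,s_k(V)$ and the resulting object is still a singly indexed sequence. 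For the \emph{tensor product} the relevant object is the full doubly (or $m$-fold) indexed family $(x_n y_k)_{n,k}$, and it does not drop to $l_{r/m,\,s/m}$. Concretely, take $x=y=(1/n)\in l_2$: then $(x_ny_k)=(1/(nk))$ satisfies $\sum_{n,k}1/(nk)=(\sum_n 1/n)^2=\infty$, so $x\otimes y\notin l_1=l_{2/2}$, although of course $x\otimes y\in l_2$. In general $l_p\otimes l_p\subset l_p$ (same exponent), not $l_{p/2}$, and the same pattern persists in the Lorentz scale; so your claim that the product family lies in $l_{r/m,\,s/m}$ is false, and the chain of inclusions built on it collapses.

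The correct key lemma --- the one the paper uses --- is O'Neil's theorem (\cite{oneil}, Theorem 7.7): if $x,y\in l_{r,s}$ and $s\le r$, then the product family $x\otimes y$ again lies in $l_{r,s}(\mathbb N\times\mathbb N)$. This is exactly where the hypothesis $s\le r$ does its work; the paper's Appendix (Proposition \ref{PrA}) shows that the inclusion $l_{r,u}\otimes l_{r,u}\subset l_{r,u}$ genuinely fails for $u>r$, so no rearrangement of the bookkeeping can rescue an argument that does not use $s\le r$ at this precise point. In your write-up that condition is only used to certify that $S_{r/m,\,s/m}$ is a ``legitimate class,'' which is not where it bites. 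Replace the composition rule by O'Neil's theorem and you land directly in $S_{r,s}$ (with no detour through $S_{r/m,\,s/m}$, to which $U$ need not belong); the remainder of your diagram, including the two seams and the density argument, then goes through as you describe.
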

\begin{proof}
It is enough to  consider the case of the product of two operators.
Let
$$
T_i: X_i\overset {A_i}\to H_i\overset {U_i}\to H_i\overset {B_i}\to Y_i
$$
be the factorizations of operators $T_i, i=1,2,$
Here $A_i\in L(X_i,H_i), B_i\in L(H_i,Y_i)$ and $U_i\in S_{s,r}(H_i)$ for $i=1.2.$
Let $U_i:= \sum_{n=1}^{\infty} s^i_n e_n^i\otimes f_n^i,$ where $(e^i_n), (f^i_n)$ are
orthonormal systems in corresponding Hilbert spaces and $(s_n^i)\in l_{s,r}$ are the sequences
of singular numbers of the operators $U_i$ $(i=1,2.)$
Tensor product $U_1\otimes U_2$ has the following representation:

\begin{equation}
\begin{aligned}
U_1\otimes U_2(h_1\otimes h_2) &= \sum_{n=1}^{\infty} s^1_n (h_1,e_n^1) f_n^1\otimes
\sum_{k=1}^{\infty} s^2_k (h_2,e_k^2) f_k^2\\
&= \sum_{k,n} s^1_n s^2_k (h_1\otimes h_2, e^1_n\otimes e^2_k) f^1_n\otimes f^2_k.
\end{aligned}
\end{equation}

The last series is convergent in $H_1\otimes_2 H_2$ since it  follows from the conditions on $r,s$ that, e. g.,
$U_1\otimes U_2\in S_{2r}$ (we have $\sum_{k,n} (s^1_n)^{2r} (s^2_k)^{2r}<\infty).$
Therefore, the sequence $(s^1_n s^2_k)_{k,n}$ is the sequence of all singular numbers of $U_1\otimes U_2.$
 Thus, $U_1\otimes U_2= \sum_{k,n} s^1_n s^2_k  (e^1_n\otimes e^2_k)\otimes (f^1_n\otimes f^2_k).$
 Since the sequences $(s^1_n)$ and $(s^2_k)$ belong to $l_{s,r}$ and $r\le s,$
 their product $( s^1_n s^2_k)$ is also in $l_{s,r}$ by the O'Neil's theorem (see~\cite{oneil}, Theorem 7.7).

Now, consider the mappings $A_1\otimes A_2$ and $B_1\otimes B_2.$
The first one acts from the projective tensor product $X_1\widehat\otimes X_2$ to the projective tensor product
$H_1\widehat\otimes H_2.$ The second one maps 
$H_1\widetilde\otimes H_2$ to $Y_1\widetilde\otimes Y_2.$ Denoting by $\varphi$ and $\psi$ the canonical injections
$H_1\widehat\otimes H_2\to H_1\otimes_2 H_2$ and $H_1\otimes_2 H_2\to H_1\widetilde\otimes H_2$ respectively, we obtain
a factorization  of $T_1\otimes T_2$ through an $S_{r,s}-operator:$
$T=B_1\otimes B_2 \psi U_1\otimes U_2 \varphi A_1\otimes A_2:$
$$
T: X_1\widehat\otimes X_2\overset{A_1\otimes A_2}\longrightarrow H_1\widehat\otimes H_2 \overset {\varphi}\to H_1\otimes_2 H_2
\overset{U_1\otimes U_2}\longrightarrow H_1\otimes_2 H_2\overset{B_1\otimes B_2}\longrightarrow Y_1\widetilde\otimes Y_2.
$$
\end{proof}

\begin{remark}
The converse of the theorem is also true (cf. the proof of Theorem \ref{Th1}).
\end{remark}

For the case where one of the space is $M(G)$ we can get a more general result. Recall that
if the space $X$ has the RN property, then $M(G, X)= M(G)\widehat\otimes X.$ This is rather simple:
Let $\overline \mu\in M(G,X.)$ If $X\in RN,$ then there is a function $\overline f\in L^1(G,|\overline\mu|; X)$
such that $\overline\mu(E)=\int_E \overline f\, d|\overline\mu|$ for every Borel set $E.$
Identifying the space $L^1(G,|\overline\mu|; X)$ with a
subspace of $M(G, X)$ in a natural way,
we see that $\overline\mu\in L^1(G,|\overline\mu|; X)=L^1(G,|\overline\mu|)\widehat\otimes X$
(see~\cite{DiVM}).

Thus, it follows from the theorem above that if $0<s\le r<\infty$ and $X\in RN,$ then the possibility of factorization through
$S_{s,r}$-operators of the operators $\star f: M(G)\to C(G)$ and $T: X\to Y$ implies the possibility
of such a factorization for the operator $T_f: M(G, X)\to C(G,Y).$ However, we can prove such
a theorem without any assumption on the Banach space $X.$

Below we will use the following simple fact: If an operator $S: Z\to W$ in Banach spaces
can be factored as
$$
S: Z\overset L\to H\overset V\to H\overset M\to W
$$
and $W_0:=\overline{S(Z)}\subset W,$ then there is an operator $M_0: H\to W_0$ such that
$S$ has the factorization
$$
S: Z\overset L\to H\overset V\to H\overset{M_0}\to W_0\overset j\hookrightarrow W,
$$
where $j$ is an inclusion. Indeed, consider the subspace $H_0:=\overline{VL(Z)}\subset H,$
take an orthonormal projector $P: H\to H_0.$  
Put $M_0:= M|_{H_0}PVL.$

\begin{theorem}\label{ThfT}
Let $f\in C(G),$  
$0<s\le r<\infty.$
Consider a convolution operator
$\star f: M(G)\to C(G)$ and an operator $T: X\to Y.$
If the operators $\star f$ and $T$
can be factored through the $S_{r,s}$-operators then
the operators $$T_f: M(G,X)\to C(G,Y)$$ possesses the same property.
\end{theorem}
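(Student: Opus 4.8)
The plan is to build directly a factorization $T_f=\mathcal B\,W\,\mathcal A$ with $W\in S_{r,s}$, splicing together given factorizations $\star f=\beta U\alpha\colon M(G)\overset\alpha\to H_1\overset U\to H_1\overset\beta\to C(G)$ and $T=BVA\colon X\overset A\to H_2\overset V\to H_2\overset B\to Y$, where $U\in S_{r,s}(H_1)$ and $V\in S_{r,s}(H_2)$. One cannot simply quote Theorem \ref{Th2} with $X_1=M(G)$, $X_2=X$, because $M(G,X)$ is strictly larger than $M(G)\widehat\otimes X$ whenever $X$ lacks the RN property (and there $M(G)\otimes X$ is not even norm-dense in $M(G,X)$); so the ``input'' operator of the factorization has to be produced on all of $M(G,X)$ by hand.

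First I would, for $\overline\mu\in M(G,X)$, introduce $N_{\overline\mu}\in L(X^*,M(G))$ by $N_{\overline\mu}x':=\langle x',\overline\mu(\cdot)\rangle$ (so $\|N_{\overline\mu}\|\le\|\overline\mu\|$) and set
$$\mathcal A(\overline\mu):=\alpha\circ N_{\overline\mu}\circ A^{*}\colon H_2\to H_1.$$
The key point is that, by Grothendieck's theorem, every bounded operator from the (abstract) $L_1$-space $M(G)=C(G)^*$ into a Hilbert space is absolutely summing; hence $\alpha\in\Pi_2(M(G),H_1)$, and the ideal property gives $\mathcal A(\overline\mu)\in\Pi_2(H_2,H_1)=S_2(H_2,H_1)=H_1\otimes_2 H_2$ with $\sigma_2(\mathcal A(\overline\mu))\le\pi_2(\alpha)\,\|A\|\,\|\overline\mu\|$. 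Thus $\mathcal A\colon M(G,X)\to H_1\otimes_2 H_2$ is bounded; on elementary tensors $\overline\mu=\mu\otimes x$ it equals $\alpha\mu\otimes Ax$, so $\mathcal A$ genuinely extends the composite input map $\varphi\circ(\alpha\otimes A)$ of Theorem \ref{Th2}. Next I would take $W:=U\otimes V$ on $H_1\otimes_2 H_2$: exactly as in the proof of Theorem \ref{Th2}, its singular numbers are the reordered products of those of $U$ and $V$, which lie in $l_{r,s}$ by O'Neil's theorem, so $W\in S_{r,s}(H_1\otimes_2 H_2)$. Finally I would define $\mathcal B\colon H_1\otimes_2 H_2=S_2(H_2,H_1)\to C(G,Y)$ by
$$\mathcal B(R)(s):=B\,R^{*}\,\beta^{*}\delta_s\qquad(s\in G);$$
this lies in $C(G,Y)=C(G)\widetilde\otimes Y$ because $R$ is compact while $s\mapsto\beta^{*}\delta_s$ is weakly continuous, and $\|\mathcal B(R)\|\le\|B\|\,\|\beta\|\,\|R\|_{S_2}$, so $\mathcal B$ is bounded (it is the output map $(\beta\otimes B)\circ\psi$ of Theorem \ref{Th2}).

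The remaining, decisive step is to check $\mathcal B\,W\,\mathcal A=T_f$ on all of $M(G,X)$, which I would do pointwise. Fixing $\overline\mu$ and $s\in G$, and using $(W\mathcal A(\overline\mu))^{*}=V A^{**}N_{\overline\mu}^{*}\alpha^{*}U^{*}$, one gets
$$(\mathcal B\,W\,\mathcal A)(\overline\mu)(s)=B\,V A^{**}N_{\overline\mu}^{*}\big(\alpha^{*}U^{*}\beta^{*}\delta_s\big).$$
Here $\alpha^{*}U^{*}\beta^{*}\delta_s=(\beta U\alpha)^{*}\delta_s=(\star f)^{*}\delta_s$, and a Fubini computation identifies $(\star f)^{*}\delta_s$ with the continuous function $g_s(t)=f(s-t)$ (more generally $(\star f)^{*}$ carries $M(G)$ into $C(G)$, i.e.\ $\star f$ is weak$^{*}$-to-weak continuous). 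Applying $N_{\overline\mu}^{*}\colon C(G)^{**}\to X^{**}$ to $g_s\in C(G)$ yields $N_{\overline\mu}^{*}g_s=\int_G g_s\,d\overline\mu=\int_G f(s-t)\,d\overline\mu(t)\in X$; since this lies in $X$, $A^{**}$ acts on it as $A$, and using $BVA=T$ together with the commutation of $T$ with the vector integral,
$$(\mathcal B\,W\,\mathcal A)(\overline\mu)(s)=T\Big(\int_G f(s-t)\,d\overline\mu(t)\Big)=\int_G f(s-t)\,d(T\overline\mu)(t)=T_f(\overline\mu)(s).$$
Hence $T_f=\mathcal B\,W\,\mathcal A$ is factored through the $S_{r,s}$-operator $W$.

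The hard part is not any single estimate but precisely the gap $M(G)\widehat\otimes X\subsetneq M(G,X)$: it forbids obtaining the input operator by extending $\alpha\otimes A$ by density, so one must write $\mathcal A$ down explicitly and then recognize --- via Grothendieck's theorem on operators from $L_1$-spaces into Hilbert spaces --- that it automatically takes values in the Hilbert--Schmidt class $H_1\otimes_2 H_2$. The second delicate point is the adjoint bookkeeping in the last step, in particular the fact that $(\star f)^{*}$ sends point masses to honest continuous functions; this is exactly what makes the $X$-valued integral $\int_G f(s-\cdot)\,d\overline\mu$ meaningful as an element of $X$ and forces $\mathcal B\,W\,\mathcal A=T_f$ on all of $M(G,X)$, not merely on $M(G)\widehat\otimes X$.
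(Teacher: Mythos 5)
Your proposal is correct in substance, but it takes a genuinely different route from the paper's own proof. The paper also starts from Theorem \ref{Th2} applied to the restriction $\widetilde T_f\colon M(G)\widehat\otimes X\to C(G,Y)$, but it then extends the factorization to all of $M(G,X)$ by pure duality: it passes to the adjoint $\widetilde T_f^{*}\colon I(Y,M(G))\to L(X,C(G)^{**})$, restricts it to $Y^{*}\widehat\otimes M(G)\subset I(Y,M(G))$ (using the metric approximation property of $C(G)$ and its duals), dualizes once more to obtain $\tau^{*}\colon I(C(G),X^{**})\to L(M(G),Y^{**})$, and finally identifies $M(G,X)$ with a subspace of $I(C(G),X^{**})=\Pi_1(C(G),X^{**})$ on which $\tau^{*}$ coincides with $T_f$; factorizations through $S_{r,s}$-operators survive the adjoints and restrictions involved (the latter by the orthogonal-projection remark stated just before the theorem). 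You instead construct the extended factorization explicitly: the operator $N_{\overline\mu}$, Grothendieck's theorem forcing $\alpha\in\Pi_2(M(G),H_1)$ and hence $\mathcal{A}(\overline\mu)\in S_2(H_2,H_1)$, the same O'Neil argument for $U\otimes V\in S_{r,s}$, and a pointwise verification that $\mathcal{B}W\mathcal{A}=T_f$ on all of $M(G,X)$. Your version buys constructiveness --- explicit formulas, an explicit norm bound, and a transparent reason why no Radon--Nikod\'ym hypothesis on $X$ is needed --- at the price of invoking Grothendieck's theorem and some careful adjoint bookkeeping; the paper's version is shorter given Theorem \ref{Th2} but conceals the extension step inside the identification $M(G,X)=I(C(G),X)$ and the injectivity of the ideal $\Pi_1$. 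One small point to tidy in yours: the identification of $H_1\otimes_2 H_2$ with the Hilbert--Schmidt class $S_2(H_2,H_1)$ is conjugate-linear in the second factor over $\mathbb C$, so either work with the conjugate space of $H_2$ or replace the Hilbert-space adjoints $A^{*},R^{*},\beta^{*}$ by transposes; this affects only notation, not the argument.
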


\begin{proof}
Denote the restriction of the operator $T_f$ onto $M(G)\widehat\otimes X$ by $\widetilde T_f$
We have:
$$
M(G,X)=I(C(G),X)\  \text{ and }\ (X^*\widetilde\otimes C(G))^*=I(C(G), X^{**})\supset M(G, X)
$$
(for the first equality, see \ref{DiVM} (Diestel-Uhl, Vector Measures, p. 162, Th. 3)).

By Theorem \ref{Th2}, the restricted operator $\widetilde T_f: M(G)\widehat\otimes X\to C(G,Y)$
can be factored through a $S_{r,s}$-operator. Then the dual operator
$$
\widetilde T_f^*:\,  I(Y,M(G))= (C(G)\widetilde\otimes Y)^*\to L(X, C(G)^{**})
$$
can be factored through a $S_{r,s}$-operator (we use the equality $C(G,Y)=C(G)\widetilde\otimes Y).$
But
$$Y^*\widehat\otimes M(G)\subset I(Y, M(G))$$
since the space $C(G)$ and all of  its duals have the metric approximation property.
Therefore,
$\widetilde T_f^*$ maps $Y^*\widehat\otimes M(G)$ into $X^*\widetilde\otimes C(G)$
(apply definition of $T_f)$ and its restriction to the first tensor product
can be factored through a $S_{r,s}$-operator.

Let $\tau$ be a restriction of  $\widetilde T_f^*$ to the subspace $Y^*\widehat\otimes M(G).$
Consider the dual operator $\tau^*:$
$$
\tau^*: I(C(G), X^{**})\to L(M(G), Y^{**}).
$$
Since $I(C(G),Z)=\Pi_1(C(G),Z)$ for any Banach space $Z$ and the ideal $\Pi_1$ of
$1$-absolutely summing operators is  injective,     
the space $M(G,X)$ can be naturally identify with a subspace of $I(C(G), X^{**})$
and the restriction of $\tau^*$ to this subspace is nothing that $T_f.$
It follows that $T_f$ can be factored through a $S_{r,s}$-operator.
\end{proof}

\begin{corollary}
Let $f\in C(G),$
$0<s\le r<\infty.$
Consider a convolution operator
$\star f: M(G)\to C(G)$ and an operators $T_k: X_k\to Y_k,$ $k=1,2,\dots n,$
If the operators $\star f$ and $T_k$
can be factored through the $S_{r,s}$-operators then
the corresponding operator $$T_f: M(G,\widehat\bigotimes_{k=1}^n  X_k)\to C(G,\widetilde\bigotimes_{k=1}^n Y_k)$$ possesses the same property.
\end{corollary}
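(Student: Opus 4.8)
The plan is simply to compose the two results just established. Set
$X := X_1 \widehat\otimes X_2 \widehat\otimes \cdots \widehat\otimes X_n$ and
$Y := Y_1 \widetilde\otimes Y_2 \widetilde\otimes \cdots \widetilde\otimes Y_n$
(iterated projective, resp.\ injective, tensor products of Banach spaces, which are well defined and associative), and let
$T := T_1 \otimes T_2 \otimes \cdots \otimes T_n : X \to Y$ be the corresponding tensor product operator.

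First I would apply Theorem \ref{Th2}. Since $0 < s \le r < \infty$ and each $T_k$ can be factored through an $S_{r,s}$-operator in a Hilbert space, that theorem (with $m = n$) provides a factorization of $T$ through an $S_{r,s}$-operator. Together with the hypothesis that $\star f : M(G) \to C(G)$ also factors through an $S_{r,s}$-operator, both inputs required by Theorem \ref{ThfT} are now at hand for the triple $(X, Y, T)$.

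Then I would invoke Theorem \ref{ThfT}, which yields that $T_f : M(G, X) \to C(G, Y)$ factors through an $S_{r,s}$-operator. Unwinding the definitions, $M(G, X) = M(G, \widehat\bigotimes_{k=1}^n X_k)$, $C(G, Y) = C(G, \widetilde\bigotimes_{k=1}^n Y_k)$, and the operator denoted $T_f$ here, built from $T = T_1 \otimes \cdots \otimes T_n$ via $T_f(\overline\mu)(s) = \int_G f(s - t)\, dT\overline\mu(t)$, is precisely the ``corresponding operator'' named in the corollary; so the conclusion follows.

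There is essentially no obstacle: all the analytic content is carried by Theorems \ref{Th2} and \ref{ThfT}, and the only thing that needs a line of checking is the harmless bookkeeping that the iterated tensor products make sense as Banach spaces and that the $T_f$ appearing in the statement coincides with $(T_1 \otimes \cdots \otimes T_n)_f$ in the notation of this section, which is immediate once one unravels the canonical inclusions $M(G) \otimes X_1 \otimes \cdots \otimes X_n \subset M(G) \widehat\otimes (X_1 \widehat\otimes \cdots \widehat\otimes X_n) \subset M(G, X)$ and their analogues on the target side. If one prefers not to appeal to the several-factor Theorem \ref{Th2} directly, one can instead iterate Theorem \ref{ThfT} (or Theorem \ref{Th1}) $n$ times, peeling off one $T_k$ at each stage; the only slightly fiddly point then is tracking the associativity identifications $C(G, Y_1 \widetilde\otimes \cdots \widetilde\otimes Y_n) = C(G, Y_1) \widetilde\otimes Y_2 \widetilde\otimes \cdots$, which is again routine.
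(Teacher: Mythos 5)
Your proposal is correct and follows exactly the paper's own argument: the paper's proof is literally ``Apply Theorem \ref{Th2} to $X=\widehat\otimes_{k=1}^n X_k$ and to the tensor product of the operators $T_k$. Then apply Theorem \ref{ThfT}.'' The extra bookkeeping you mention about associativity of the tensor products and the identification of $T_f$ with $(T_1\otimes\cdots\otimes T_n)_f$ is harmless and consistent with what the paper leaves implicit.
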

\begin{proof}
Apply Theorem \ref{Th2} to   $X=\widehat\otimes_{k=1}^n  X_k$ and to the tensor product of the operators $T_k.$
Then apply Theorem \ref{ThfT}.
\end{proof}

\begin{corollary}
Let $f_k\in C(G),$  $k=1,2,\dots n,$
$0<s\le r<\infty.$
Consider the convolution operators
$\star f_k: M(G)\to C(G)$ and an operators $T_k: X_k\to Y_k,$ $k=1,2,\dots n,$
If the operators $\star f_k$ and $T_k$
can be factored through the $S_{r,s}$-operators then
the corresponding operator $$T_f: \widehat\bigotimes_{k=1}^n M(G,  X_k)\to \widetilde\bigotimes_{k=1}^n C(G, Y_k)
$$ possesses the same property.
\end{corollary}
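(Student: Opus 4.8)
The plan is to obtain this statement as an immediate consequence of Theorem \ref{ThfT} and Theorem \ref{Th2}, applied one after the other.

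First I would treat each index $k$ separately. For fixed $k$, the convolution operator $\star f_k\colon M(G)\to C(G)$ and the operator $T_k\colon X_k\to Y_k$ both factor through $S_{r,s}$-operators in Hilbert spaces by hypothesis, and $0<s\le r<\infty$; so Theorem \ref{ThfT} applies and yields that the operator $T_{f_k}:=T_k\circ\star f_k\colon M(G,X_k)\to C(G,Y_k)$ also factors through an $S_{r,s}$-operator. Carrying this out for $k=1,\dots,n$ produces $n$ operators $T_{f_k}\in L(M(G,X_k),C(G,Y_k))$, each admitting a factorization
\[
T_{f_k}\colon\ M(G,X_k)\overset{A_k}\to H_k\overset{U_k}\to H_k\overset{B_k}\to C(G,Y_k),\qquad U_k\in S_{r,s}(H_k).
\]

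Next I would invoke Theorem \ref{Th2} with the Banach spaces $M(G,X_k)$ playing the role of the $X_k$ there, the Banach spaces $C(G,Y_k)$ playing the role of the $Y_k$, and the operators $T_{f_k}$ in place of the $T_k$. Since each $T_{f_k}$ factors through an $S_{r,s}$-operator and $0<s\le r<\infty$, Theorem \ref{Th2} furnishes a factorization through an $S_{r,s}$-operator for the tensor product
\[
T_{f_1}\otimes\dots\otimes T_{f_n}\colon\ \widehat\bigotimes_{k=1}^n M(G,X_k)\longrightarrow\widetilde\bigotimes_{k=1}^n C(G,Y_k),
\]
with the projective tensor product on the domain side and the injective one on the range side --- precisely the operator denoted $T_f$ in the statement once the notation is unwound (in this multi-index situation $T_f$ is to be read as $\bigotimes_{k=1}^n T_{f_k}$, not as a single convolution composed with a single tensor operator). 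This completes the argument.

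There is essentially no serious obstacle here: the corollary is a formal combination of the two theorems. The only points demanding a little care are the bookkeeping of which crossnorm (projective versus injective) sits on which side of each tensor product --- so that the output format of Theorem \ref{ThfT} matches the input format of Theorem \ref{Th2} --- and confirming that the symbol $T_f$ in the displayed statement is indeed the tensor product $T_{f_1}\otimes\dots\otimes T_{f_n}$ and not some other natural operator the notation might suggest. One may also note that the endpoint case $r=s=\infty$ would go through verbatim, using the corresponding endpoint versions of Theorems \ref{ThfT} and \ref{Th2}.
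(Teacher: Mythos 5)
Your argument is exactly the paper's: apply Theorem \ref{ThfT} to each pair $(\star f_k, T_k)$ to get that every $T_{f_k}$ factors through an $S_{r,s}$-operator, and then apply Theorem \ref{Th2} to the tensor product $T_{f_1}\otimes\dots\otimes T_{f_n}$. Your extra remarks on the crossnorm bookkeeping and on reading $T_f$ as $\bigotimes_k T_{f_k}$ are correct and only make explicit what the paper leaves implicit.
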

\begin{proof}
By Theorem \ref{ThfT}, for every $k$ the operator $T_{f_k}: M(G,X_k)\to C(G,X_k)$
can be factored through an $S_{r,s}$-operator.
By Theorem \ref{Th2}, the operator $T_f$ possesses the same property.
\end{proof}

\begin{remark}
The conditions of type $s\le r$ are essential (see the end of  Appendix).
\end{remark}

\newpage

\section{Appendix}

\subsection{On Lorentz sequence spaces}

Main facts here are taken from \cite{Pie82}.

\begin{theorem}
Let $p,q\in  (0,\infty)$ and $q>p.$
There exist $x,y\in l_{p,q}$ such that $x\ot y\notin l_{p,q}.$
\end{theorem}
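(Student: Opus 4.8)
The plan is to construct explicit sequences in $l_{p,q}$ whose "tensor product" $x\otimes y$ — i.e. the doubly-indexed sequence $(x_iy_j)_{i,j}$, rearranged in nonincreasing order — fails to lie in $l_{p,q}$. The natural candidates are $x=y$ with $x_n \sim n^{-1/p}(\log(n+1))^{-\beta}$ for a suitable exponent $\beta$. Recall that $x\in l_{p,q}$ iff $\sum_n n^{q/p-1}x_n^{*q}<\infty$; for a logarithmically corrected power sequence this amounts to a sum of the form $\sum_n n^{-1}(\log n)^{-\beta q}$, which converges precisely when $\beta q>1$. So I would pick $\beta$ with $1/q<\beta\le 1/p$ (possible since $q>p$ means $1/q<1/p$), ensuring $x\in l_{p,q}$ while keeping $\beta$ as small as the constraint allows.

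The key step is to estimate the decreasing rearrangement of $(x_iy_j)_{i,j}=(x_i x_j)$. The number of pairs $(i,j)$ with $x_ix_j\ge t$ is comparable to the counting function of the product; for $x_n\sim n^{-1/p}(\log n)^{-\beta}$ one gets $x_ix_j\gtrsim (ij)^{-1/p}(\log i\,\log j)^{-\beta}$, and counting lattice points under a hyperbola $ij\le N$ produces an extra logarithmic factor: roughly $\#\{(i,j): x_ix_j \text{ among the top }N\} $ leads to $(x\otimes x)^*_N \sim N^{-1/p}(\log N)^{1/p-2\beta}$ up to lower-order log corrections (the $(\log N)^{1/p}$ coming from the hyperbola count, the $(\log N)^{-2\beta}$ from the two logarithmic weights, with the precise bookkeeping of iterated logs being the routine part). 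Then
\[
\sum_N N^{q/p-1}\big((x\otimes x)^*_N\big)^q \sim \sum_N N^{-1}(\log N)^{q(1/p-2\beta)},
\]
which diverges as soon as $q(1/p-2\beta)\ge -1$, i.e. $\beta\le \tfrac12(1/p+1/q)$. Since I chose $\beta$ just above $1/q$, and $1/q < \tfrac12(1/p+1/q)$ because $p<q$, this divergence condition is satisfied. Hence $x\otimes x\notin l_{p,q}$ while $x\in l_{p,q}$, as required.

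The main obstacle is making the two-sided estimate on $(x\otimes x)^*_N$ rigorous: one needs a lower bound on the rearrangement (to force divergence) rather than just an upper bound. I would obtain the lower bound by exhibiting, for each $N$, at least $cN(\log N)$ pairs $(i,j)$ with $i,j\le \sqrt N$, say, for which $x_ix_j \ge c' N^{-1/p}(\log N)^{-2\beta}$ — a clean sub-square of the hyperbolic region — which gives $(x\otimes x)^*_{N\log N}\gtrsim N^{-1/p}(\log N)^{-2\beta}$ and suffices after reindexing. Alternatively, one can invoke O'Neil's theorem on products in Lorentz spaces (cited in the paper as \cite{oneil}, Theorem 7.7) in its sharp form: $l_{p,q}\cdot l_{p,q}\subset l_{p/2,q/2}$ when $q\le p$ but the inclusion into $l_{p,q}$ genuinely fails when $q>p$, and the standard counterexample underlying that failure is exactly a sequence of the above type; citing and adapting that construction would shorten the argument considerably. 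Either way, the power-type main terms match automatically — it is only the logarithmic exponents that must be tracked carefully, and the inequality $1/q<\tfrac12(1/p+1/q)$ for $p<q$ is what makes the construction work.
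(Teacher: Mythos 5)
Your construction is correct in outline but takes a genuinely different route from the paper. The paper proves the statement via Proposition \ref{PrA} (following Pietsch): assuming the inclusion $l_{p,q}\otimes l_{p,q}\subset l_{p,q}$ one gets a constant $c$ with $\|x\otimes y\|_{p,q}\le c\,\|x\|_{p,q}\|y\|_{p,q}$, and the finite block sequences $x_m$ taking the value $2^{-i}$ on a block of length $2^i$ for $i\le m$ (truncations of $(n^{-1/p})$, carried out there for $p=1$, $q=2$) satisfy $\|x_m\|_{1,2}\asymp m^{1/2}$ while $\|x_m\otimes x_m\|_{1,2}\asymp m^{3/2}$, because the value $2^{-k}$ occurs $(k+1)2^k$ times in the product; this contradicts $m^{3/2}\le c\,m$. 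You instead exhibit a single infinite counterexample $x_n\sim n^{-1/p}(\log n)^{-\beta}$ with $1/q<\beta\le\tfrac12(1/p+1/q)$, which avoids the uniform-boundedness step entirely but obliges you to estimate the decreasing rearrangement of $x\otimes x$. The underlying phenomenon (the extra logarithm from counting lattice points under a hyperbola) is identical in both arguments, and your exponent bookkeeping is right: $x\in l_{p,q}$ iff $\beta q>1$, and $\sum_N N^{-1}(\log N)^{q(1/p-2\beta)}$ diverges iff $\beta\le\tfrac12(1/p+1/q)$, the window being nonempty exactly when $p<q$. Your approach buys an explicit element of $l_{p,q}\setminus$(domain of the product map) and needs no closed-graph argument; the paper's buys clean finite computations at the price of invoking the a priori constant $c$.

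One concrete slip in your lower bound must be repaired: the square $\{(i,j):\,i,j\le\sqrt N\}$ contains only about $N$ lattice points, not $cN\log N$, and $N$ points at height $N^{-1/p}(\log N)^{-2\beta}$ would only yield divergence for $\beta\le 1/(2q)$, which is incompatible with $\beta>1/q$. Use instead the full hyperbolic region $\{(i,j):\,2\le i,j,\ ij\le N\}$: it contains $\asymp N\log N$ points, and on it $x_ix_j\ge c'\,N^{-1/p}(\log N)^{-2\beta}$ still holds because $\log i\cdot\log j\le\tfrac14(\log(ij))^2\le\tfrac14(\log N)^2$. With that substitution your key estimate $(x\otimes x)^*_{N\log N}\gtrsim N^{-1/p}(\log N)^{-2\beta}$ is correct and the argument closes.
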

This will be prooved below,

We need some more information on Lorentz spaces (see \cite{EigPie} for details).

The n-th {\it approximation number}\, of  $x\in l_\infty(I):$
$$
a_n(x):=\inf\{||x-u||_{l+\infty(I)}:\, u\in l_{\infty}(I), \text{card} (u)<n\}.
$$

$$
a_n(x):=\inf\{c\ge 0:\, \text{card} (i\in I: |x_i|\ge c)<n\}.
$$ 
When $x=(x_n)_{n=1}^\infty$ and $|x_1|\ge|x_2|\ge\cdots\ge0$ we have
$a_n(x)=|x_n|.$ 
In general case, $(a_n(x))$ is the non-increasing  
rearrangement of $x.$ An usual notation: $(x_n^*).$

The Lorentz space $l_{r,w}(I)$ consists of all complex-valued families $x=(x_i)$
such that $$
(n^{1/r-1/w} a_n(x))\in l_w.
$$
$l_{r,w}(I)$ is a quasi-normed space:
$$
||x||_{l_{r,w}}:= \(\sum_{n=1}^\infty n^{w/r-1}a_n(x)^w\)^{1/w}\ \text{ if }\ 0<w<\infty
$$
and 
$$
||x||_{l_{r,\infty}}:=\sup \{n^{1/r}a_n(x):\, n\in \mathbb N\}.
$$
If $I=\mathbb N,$ then we use the notation $l_{r,w}.$

From \cite{EigPie}(2.1.10):
\begin{lemma}
$$x\in l_{r,w}\ \text{ if and only if }\ (2^{k/r}a_{2^k}(x))\in l_w.
$$
$||(2^{k/r}a_{2^k}(x))||_{l_w}$ is an equivalent quasi-norm.
\end{lemma}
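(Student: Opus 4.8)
The plan is to prove both directions at once by establishing a two-sided estimate between the two quasi-norms via the classical dyadic condensation argument, using only the fact that the rearrangement $(a_n(x))$ is non-increasing. First I would treat the main case $0<w<\infty$. Writing $\alpha:=w/r-1$, I would split the defining sum $\sum_{n=1}^\infty n^\alpha a_n(x)^w$ into the dyadic blocks $B_k:=\{n:\,2^k\le n<2^{k+1}\}$, $k\ge0$, each containing exactly $2^k$ integers (so that the block $B_0=\{1\}$ carries the term $a_1(x)^w=\|x\|_\infty^w$).

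On each block two elementary comparisons are available. By monotonicity of the rearrangement, $a_{2^{k+1}}(x)\le a_n(x)\le a_{2^k}(x)$ for $n\in B_k$; and the weight $n^\alpha$ is comparable to $(2^k)^\alpha$ with the two-sided constants $\min(1,2^\alpha)$ and $\max(1,2^\alpha)$. Multiplying these and summing the $2^k$ terms of $B_k$ gives the block bound
$$
\sum_{n\in B_k} n^\alpha a_n(x)^w \asymp 2^{kw/r}\,a_{2^k}(x)^w,
$$
the upper side using $a_n\le a_{2^k}$ and the lower side using $a_n\ge a_{2^{k+1}}$. Summing over $k$ shows at once that $\sum_n n^\alpha a_n^w$ is dominated by $\sum_{k\ge0}2^{kw/r}a_{2^k}^w$; conversely the lower block bounds, after the shift $j=k+1$, reconstruct this dyadic sum except for its leading term $a_1(x)^w$, which is itself the $n=1$ term of $\sum_n n^\alpha a_n^w$ and hence harmless. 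This establishes the equivalence of the two quasi-norms, and in particular membership in one space is equivalent to membership in the other.

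For the endpoint $w=\infty$ I would argue directly: restricting the supremum $\sup_n n^{1/r}a_n(x)$ to dyadic indices $n$ gives one inequality for free, while for the reverse one picks, for each $n$, the $k$ with $n\in B_k$ and combines $a_n\le a_{2^k}$ with $n^{1/r}\le 2^{1/r}(2^k)^{1/r}$, producing the constant $2^{1/r}$. The only genuine bookkeeping, and the one place where care is needed, is tracking the comparison constants through the exponent $\alpha=w/r-1$, whose sign is not fixed, and disposing of the boundary contribution at $k=0$; no idea beyond monotonicity of the rearrangement and the geometric growth of the dyadic weights is required.
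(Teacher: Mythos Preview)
Your argument is correct: the dyadic condensation (Cauchy condensation) you carry out is exactly the standard proof, and your handling of the sign of $\alpha=w/r-1$, the index shift $j=k+1$, and the boundary term $a_1(x)^w$ is clean. Note that the paper does not actually prove this lemma; it quotes it from Pietsch's book \cite{EigPie}, 2.1.10, where the same condensation argument appears --- so your proof is essentially the one behind the cited reference.
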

Moreover, we have the following from \cite{Pie82}
(getting a more general result).


A sequence $(n_k)$ of natural numbers beginning with
$n_0=1$ is called quasi-geometric if there are constants $a$ and $b$ such that
$$
1< a\le \frac{n_{k+1}}{n_k}\le b>\infty\  \text{ for }\ k=0,1,2,\dots.
$$
For example,
$$
n_k:=(k+1)2^k.
$$

\begin{lemma}\label{LA2}
Let $(n_k)$ be a quasi-geometric sequence and $x\in l_\infty(I).$
$$x\in l_{r,w}(I)\ \text{ if and only if }\ (n_k^{1/r}a_{n_k}(x))\in l_w.
$$
\end{lemma}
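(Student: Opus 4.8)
The plan is to deduce Lemma~\ref{LA2} from the dyadic case $n_k = 2^k$ --- which is exactly the preceding lemma (\cite{EigPie}, 2.1.10) --- by comparing the quasi-geometric sequence $(n_k)$ with the powers of $2$. Since both sides of the claimed statement depend only on the non-increasing rearrangement $(a_n(x))_{n\ge1}$ of $x,$ I write $\alpha_n := a_n(x),$ so $\alpha_1 \ge \alpha_2 \ge \cdots \ge 0,$ and for $k \ge 0$ I put $j_k := \lfloor \log_2 n_k \rfloor,$ so that $2^{j_k} \le n_k < 2^{j_k+1},$ $j_k \ge 0$ (as $n_0 = 1$), and $k \mapsto j_k$ is non-decreasing. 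The goal is to prove that
$$\|(n_k^{1/r}\alpha_{n_k})_k\|_{l_w} \quad\text{and}\quad \|(2^{j/r}\alpha_{2^j})_{j\ge0}\|_{l_w}$$
are equivalent quasi-norms; combined with the preceding lemma this yields the equivalence $x \in l_{r,w}(I) \iff (n_k^{1/r}a_{n_k}(x)) \in l_w$ (and, as a by-product, an equivalent quasi-norm on $l_{r,w}$). I carry out the case $0 < w < \infty$; for $w = \infty$ one replaces every $\sum$ by $\sup$ and argues verbatim.

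For one inequality, monotonicity of $(\alpha_n)$ together with $2^{j_k} \le n_k < 2^{j_k+1}$ gives $\alpha_{n_k} \le \alpha_{2^{j_k}}$ and $n_k^{1/r} \le 2^{1/r}\,2^{j_k/r},$ hence $n_k^{1/r}\alpha_{n_k} \le 2^{1/r}\,2^{j_k/r}\alpha_{2^{j_k}}.$ If $j_k = j_{k+p},$ then $n_k, n_{k+p} \in [2^{j_k}, 2^{j_k+1}),$ so $a^p \le n_{k+p}/n_k < 2,$ which bounds $p$ --- hence the cardinality of the fibre $\{k : j_k = j\}$ (an interval of consecutive integers) --- by a constant $C_a$ depending only on $a.$ Therefore
$$\sum_k (n_k^{1/r}\alpha_{n_k})^w \le 2^{w/r}\sum_k (2^{j_k/r}\alpha_{2^{j_k}})^w \le 2^{w/r}C_a\sum_{j\ge0}(2^{j/r}\alpha_{2^j})^w.$$

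For the reverse inequality, fix $j \ge 0$ and let $k(j)$ be the largest $k$ with $n_k \le 2^j$ (well defined since $n_0 = 1 \le 2^j$ and $n_k \to \infty$; moreover $k(\cdot)$ is non-decreasing). Then $2^j < n_{k(j)+1} \le b\,n_{k(j)},$ so $2^j/b < n_{k(j)} \le 2^j$; together with $\alpha_{2^j} \le \alpha_{n_{k(j)}}$ (monotonicity, since $n_{k(j)} \le 2^j$) this gives $2^{j/r}\alpha_{2^j} \le b^{1/r}\,n_{k(j)}^{1/r}\alpha_{n_{k(j)}}.$ Moreover, if $k(j) = k(j+q) = k,$ then $n_k \le 2^j$ and $2^{j+q} < n_{k+1} \le b\,n_k \le b\,2^j,$ so $2^q < b,$ which bounds the cardinality of the fibre $\{j : k(j) = k\}$ by a constant $C_b$ depending only on $b.$ Hence
$$\sum_{j\ge0}(2^{j/r}\alpha_{2^j})^w \le b^{w/r}\sum_{j\ge0}(n_{k(j)}^{1/r}\alpha_{n_{k(j)}})^w \le b^{w/r}C_b\sum_k (n_k^{1/r}\alpha_{n_k})^w.$$
Chaining these four estimates with the preceding lemma finishes the proof.

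There is no deep obstacle here: the argument is just a transfer through the dyadic scale. The one point that needs care is that the two index maps $k \mapsto j_k$ and $j \mapsto k(j)$ both have uniformly bounded fibres, and this is precisely where the two-sided bound $1 < a \le n_{k+1}/n_k \le b < \infty$ is used --- the lower bound $a$ controls the fibres of $k \mapsto j_k$, while the upper bound $b$ controls those of $j \mapsto k(j)$. One must also keep an eye on the boundary index $j = 0$ and check that $C_a,C_b$ remain finite when $a$ is close to $1$ or $b$ is large, but this causes no real difficulty.
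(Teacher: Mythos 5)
Your proof is correct: the dyadic-transfer argument with uniformly bounded fibres (the lower ratio bound $a$ controlling the fibres of $k\mapsto j_k$, the upper bound $b$ controlling those of $j\mapsto k(j)$) is exactly the standard route, and the details you give (well-definedness of $k(j)$ from $n_0=1$ and $n_k\to\infty$, the monotonicity of $(a_n(x))$, the case $w=\infty$) all check out. The paper offers no proof of this lemma --- it is quoted from \cite{Pie82} as a generalization of the dyadic criterion \cite{EigPie}, 2.1.10 --- and your argument is in substance the one in that source, so there is nothing to add.
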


{\bf Notation:}\,
$$l_{r,u}\otimes l_{r,u}:=\{h\in l_\infty(\mathbb N\times\mathbb N):\, 
\exists\, x,y\in l_{r,u},\, h(n,m)=(x_ny_m)\}.$$
Also, $h=x\ot y.$

\begin{remark}
If $$l_{r,u}\otimes l_{r,v}\subset l_{r,u},$$
then there is a constant $c=c(r,u,v)"0$ so that for $x\in l_{r,u}$ and $y\in l_{r,v}$
$$
||x\ot y||_{l(r,u)}\le c ||x||_{r,u}||y||_{r,v}.
$$
\end{remark}


\begin{proposition}\label{PrA}
Let $r,u\in  (0,\infty).$ If
$$l_{r,u}(\mathbb N)\otimes l_{r,u}(\mathbb N)\subset l_{r,u}(\mathbb N\times\mathbb N),$$
then $u\le r.$ 
\end{proposition}
\begin{proof} Sketch:
Consider a partial case where $r=1, u=2$ (trying get a contradiction).

Take the finite sequences $X_m:=(x_{mn})$ defined by
$$
x_{mn}:= 2^{-i}\ \text{ if }\ 2^i\le n< 2^{i+1}\ \text{ and } i\le m
$$
and $x_{mn}=0$ otherwise.

It turns out that
$$
||x_m||_{l_{1,2}}\asymp m^{1/2}.
$$

Recall that the natural numbers $n_k:=(k+1)2^k$ constitute a quasi-geometric
sequence. 
Since the double sequences $x_m\ot x_m$ contain $n_k$-times the coordinate $2^{-k}$
whenever $k\le m,$ we obtain
$$
||x_m\ot x_m||_{l_{r,u}(N\times N)}\asymp \(\sum_{k=0}^\infty [n_ka_{n_k}(x_m\ot x_m)]^2\)^{1/2}
$$
$$
\asymp \(\sum_{k=0}^\infty (k+1)^2\)^{1/2}\asymp m^{3/2}.
$$
On the other hand, we must have
$$
||x\ot y||_{l(1,2)}\le c ||x||_{1,2}||y||_{1,2}.
$$
This yields
$$
m^{3/2}\prec m^{1/2+1/2}.
$$
\end{proof}

\subsection{Returning to convolutions}.

Almost the same proof as  the proof of Theorem \ref{ThP1} gives us

\begin{theorem}\label{ThP2}
Let $f\in C(G),$  $0<q, s\le1$ and  $1/r=1/s-1, 1/p= 1/q-1.$ Consider a convolution operator  
$\star f: M(G)\to C(G).$
If the operator $\star f$ can be factored through a
Lorentz-Schatten $S_{r,p}$-operator in a Hilbert space, then
the set  of  Fourier coefficients $\hat f$ belongs to $l_{s,q}.$
\end{theorem}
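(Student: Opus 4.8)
The plan is to mimic the proof of Theorem \ref{ThP1}, part 1), but now keeping track of Lorentz indices rather than just $\ell_p$ membership. Suppose $\star f = AUB$ with $B\colon M(G)\to H$, $U\in S_{r,p}(H)$, $A\colon H\to C(G)$. As in the scalar proof, let $j\colon C(G)\hookrightarrow M(G)$ be the natural injection; the Fourier coefficients $\hat f(\gamma)$ are precisely the eigenvalues of $AUBj\colon C(G)\to M(G)\to C(G)$, and $AUBj$ and $BjAU$ have the same eigenvalue sequence. The key observation is that $j$ factors as $C(G)\hookrightarrow L_2(G)\hookrightarrow M(G)$, so $j\in\Pi_2(C(G),M(G))$, and likewise $B\in\Pi_2(M(G),H)$ (any operator into $M(G)=C^*(G)$ is $2$-summing, or one uses that $B$ factors through the Hilbert space). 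Since $\Pi_2(H,H)=S_2(H,H)$, the composition $Bj\in S_2\circ S_2 \subset S_1(H,\cdot)$, so $BjA\in S_1$ as well; more precisely, the relevant composition lands in $S_{1,1}=S_1$.

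Then I would invoke the multiplication rule for Lorentz--Schatten classes stated in the Preliminaries: $S_{r,p}\circ S_{1,1}\subset S_{s,q}$ where $1/s = 1/r + 1/1 = 1/r+1$ and $1/q = 1/p + 1/1 = 1/p+1$. These are exactly the relations $1/s = 1/r+1$ (equivalently $1/r = 1/s-1$) and $1/q = 1/p+1$ (equivalently $1/p = 1/q-1$) from the hypothesis. Hence $BjAU\in S_{s,q}(H)$, so its eigenvalue sequence lies in $\ell_{s,q}$ — here one uses the Weyl-type inequality that the eigenvalues of an operator in $S_{s,q}$ form a sequence in $\ell_{s,q}$ (this is standard, e.g. from the König eigenvalue theorem referenced implicitly via \cite{K86}, \cite{EigPie}). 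Therefore $\{\hat f(\gamma)\}\in\ell_{s,q}$, which is the claim.

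The main obstacle — or rather the one point needing a little care — is the eigenvalue estimate: unlike the $S_p$ case where $S_p$ eigenvalues lying in $\ell_p$ is the classical Weyl inequality, one needs the sharper statement that operators in the Lorentz--Schatten class $S_{s,q}$ have eigenvalues in $\ell_{s,q}$. This is exactly the type of result attributed to H.~König in the Introduction (eigenvalues of $p$-nuclear operators lie in $l_{r,p}$), and it holds in the stated generality; I would simply cite it. A second minor point: to apply the composition formula $S_{r,p}\circ S_{1,1}\subset S_{s,q}$ one should make sure the intermediate factor is genuinely in $S_1$ with the right quasi-norm bound, which follows from $\sigma_{1}(Bj A)\le \pi_2(B)\,\pi_2(jA)$ type estimates; the precise constant is irrelevant for the qualitative membership statement. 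Note that, unlike Theorem \ref{ThP1}, only one direction is asserted here — there is no converse claimed — so no construction of explicit factorizing operators is required, which keeps the proof short.
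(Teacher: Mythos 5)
Your proposal is correct and is exactly the route the paper intends: the paper gives no separate argument for Theorem \ref{ThP2}, stating only that ``almost the same proof as the proof of Theorem \ref{ThP1}'' applies, and your write-up is precisely that proof with the Lorentz indices tracked through the composition rule $S_{r,p}\circ S_{1,1}\subset S_{s,q}$ from the Preliminaries and the Weyl-type eigenvalue inequality for Lorentz--Schatten classes. No gaps; the two points you flag (the $S_1$ membership of $BjA$ via the two $\Pi_2$ factors, and the eigenvalue estimate for $S_{s,q}$) are handled exactly as in the scalar $S_p$ case.
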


Also, for the tensor products we have (as a consequence of Theorem \ref{Th2}):

 \begin{theorem}\label{ThA2}
Let $0<s\le r<\infty,$ $f_1, f_2\in C(G).$
Consider the convolution operators $\star f_1, \star f_2: M(G)\to C(G).$
If these operators 
can be factored through the $S_{r,s}$-operators then
the tensor product
$$T:=\star f_1\otimes \star f_2: M(G)\widehat\otimes M(G)\to C(G)\widetilde\otimes C(G)$$
  possesses the same property. The converse is also true.
\end{theorem}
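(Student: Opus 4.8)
The plan is to reduce Theorem~\ref{ThA2} to the already-proven Theorem~\ref{Th2} by recognizing the tensor product operator $\star f_1 \otimes \star f_2$ as a special case of the product-of-operators construction there. First I would apply Theorem~\ref{Th2} directly with $m=2$, $X_1 = X_2 = M(G)$, $Y_1 = Y_2 = C(G)$, and $T_i = \star f_i$. The hypothesis $0 < s \le r < \infty$ matches, and the assumption that each $\star f_i$ factors through an $S_{r,s}$-operator is exactly what we are given. The conclusion of Theorem~\ref{Th2} is then precisely that $T := \star f_1 \otimes \star f_2 : M(G)\widehat\otimes M(G) \to C(G)\widetilde\otimes C(G)$ factors through an $S_{r,s}$-operator, which is the forward direction of the statement. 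So the forward implication is immediate and requires essentially no new work beyond quoting Theorem~\ref{Th2}.

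For the converse, I would mimic the argument used in the proof of Theorem~\ref{Th1} (the ``$T_f$ factors $\implies$ $\star f$ and $T$ factor'' direction), which is the relevant compression technique. The idea is to recover $\star f_1$ (and symmetrically $\star f_2$) from $\star f_1 \otimes \star f_2$ by precomposing with an inclusion $M(G) \to M(G)\widehat\otimes M(G)$, $\mu \mapsto \mu \otimes \mu_0$ for a fixed $\mu_0 \in M(G)$ with $\star f_2(\mu_0) \neq 0$, and postcomposing with a functional/evaluation map $C(G)\widetilde\otimes C(G) \to C(G)$ that extracts the first coordinate by pairing the second tensor factor against a point evaluation $\delta_{s_0}$ at a point $s_0$ where $(f_2 * \mu_0)(s_0) \neq 0$, then normalizing. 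Composing these bounded maps with the given $S_{r,s}$-factorization of $T$ sandwiches $\star f_1$ (up to a scalar) through the same $S_{r,s}$-operator, hence $\star f_1$ itself factors through an $S_{r,s}$-operator; the argument for $\star f_2$ is symmetric.

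The main obstacle, such as it is, lies in the converse: one must check that the ``extraction'' map on the injective side $C(G)\widetilde\otimes C(G) \to C(G)$ is genuinely bounded and that the image of $T$ composed with the inclusion lands where the extraction is well-defined — essentially the same bookkeeping (choosing $s_0$, restricting to an appropriate subspace, using an orthogonal projection inside $H$ if needed) carried out in the proof of Theorem~\ref{Th1}. Since point evaluations are bounded functionals on $C(G)$ and $i \otimes \delta_{s_0}$ gives a bounded map $C(G)\widetilde\otimes C(G) \to C(G)$, this goes through verbatim. I would therefore present the proof as: ``The forward direction is Theorem~\ref{Th2}; for the converse, argue as in the proof of Theorem~\ref{Th1}, recovering each $\star f_i$ by pre- and post-composing the $S_{r,s}$-factorization of $T$ with suitable rank-one-type inclusions and evaluations.''
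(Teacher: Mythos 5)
Your proposal matches the paper's own treatment: the forward direction is stated there explicitly as a consequence of Theorem~\ref{Th2} (with $m=2$, $X_i=M(G)$, $Y_i=C(G)$, $T_i=\star f_i$), and the converse is handled exactly as you describe, by the compression argument of Theorem~\ref{Th1} (cf. the remark following Theorem~\ref{Th2}). No gaps; this is essentially the same proof.
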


Take now $0<q, s<1$ and $r,u\in (0,\infty), u>r$  with $1/r=1/s-1, 1/u= 1/q-1.$
Using Proposition \ref{PrA},
take two function $f_1,f_2\in C(G)$ with $\wh f_1, \wh f_2\in l_{s,q}$ but
$\wh f_1\ot \wh f_2\notin l_{s,q}.$

Then the tensor product $T:=\star f_1\otimes \star f_2$ can not be factored through $S_{r,u}$-operator
since else $\wh f_1\ot \wh f_2\in l_{s,q}$ (the proof is similar to the proof of \ref{ThP2}).

Finally, note that all the fact where we have the restrictions of type $s\le r$ (cf. Theorem  \ref{ThP2})
are sharp (i.e., the conditions of type $s\le r$ are essential). 


\end{document}